\newtheorem{example}{Example}
\newtheorem{definition}{Definition}
\newtheorem{lemma}{Lemma}
\newtheorem{theorem}{Theorem}
\newtheorem{corollary}{Corollary}
\newtheorem{proof}{Proof}
\newcommand{\mvLKIE}{\textit{mv}\textbf{LKIE}}
\newcommand{\AxPa}{Ax$_{\rm PA}$}
\newcommand{\EPa}{$\mathcal{E}_{\rm PA}$}
\newcommand{\mvLKE}{\textit{mv}\LKE}
\newcommand{\mvLKS}{\textit{mv}\LKS}
\newcommand{\LK}{\textbf{LK}}
\newcommand{\LKE}{\textbf{LKE}}
\newcommand{\LKS}{\textbf{LKS}}
\newcommand{\PA}{\textbf{PA}}
\newcommand{\PRA}{\textbf{PRA}}
\newcommand*{\shifttext}[2]{%
  \settowidth{\@tempdima}{#2}%
  \makebox[\@tempdima]{\hspace*{#1}#2}%
}
\newcommand\fat[1]{\ThisStyle{\ooalign{%
  \kern.46pt$\SavedStyle#1$\cr\kern.33pt$\SavedStyle#1$\cr%
  \kern.2pt$\SavedStyle#1$\cr$\SavedStyle#1$}}}
\newcommand*{\shifttexxt}[2]{%
  \settowidth{\@tempdima}{#2}%
  \makebox[\@tempdima]{\hspace*{#1}#2}%
}
\newcommand\dotover{\leavevmode\cleaders\hb@xt@ .22em{\hss $\cdot$\hss}\hfill\kern\z@}
\newcommand{\dotfrac}[2]{
\ooalign{$\genfrac{}{}{0pt}{0}{#1}{#2}$\cr\dotover\cr}
}
\begin{document}

\title{Proof Schemata for Theories equivalent to PA: on the Benefit of Conservative Reflection Principles}

\author{
\name{David M. Cerna\footnote{This research is supported by the FWF project P28789-N32.}}
\address{Johannes Kepler University,\\ Research Institute for Symbolic Computation,\\ Linz, Austria}
\email{david.cerna@risc.jku.at} 
\and
\name{Anela Lolic\footnote{This research is supported by the FWF project I-2671-N35.}}
\address{Technical University of Vienna,\\ Institute of Discrete Mathematics and Geometry,\\ Vienna, Austria}
\email{anela@logic.at} 
}

\abstract{
Induction is typically formalized as a rule or axiom extension of the \textbf{LK}-calculus. While this extension of the sequent calculus is simple and elegant, proof transformation and analysis can be quite difficult. Theories with an induction rule, for example Peano arithmetic do not have a {\em Herbrand theorem}. In this work we extend an existing meta-theoretic formalism, so called {\em proof schemata}, a recursive formulation of induction particularly suited for proof analysis, to Peano arithmetic. This relationship provides a meaningful conservative reflection principle between \PA\ and an alternative proof formalism. Proof schemata have been shown to have a variant of {\em Herbrand's theorem} for classical logic which can be lifted to the subsystem of our new formalism equivalent to primitive recursive arithmetic. 
}

\keywords{Proof Schemata, Proof Analysis, Peano Arithmetic , Reflection Principles}

\maketitle

\section{Introduction}
Proof schemata serve as an alternative proof formulation through primitive recursive proof specification. Essentially, the local soundness of an individual {\em proof component} is sacrificed, but globally the collection of components is sound. This property is illustrated in~\cite{DCerna2017a} where a calculus integrating this global soundness is introduced. The seminal work concerning ``proof as schema''  was focused on proof analysis of F\"{u}rstenberg's proof of the infinitude of primes by Baaz et al.~\cite{MBaaz2008a} using a rudimentary schematic formalism and  \textbf{CERES}~\cite{MBaaz2000}. Schematic proof representation  excels at proof analysis and transformation without ``unrolling'' the formal proof, thus making it particularly suited for analysis of inductive reasoning. For example, Herbrand's theorem can be extended to an expressive fragment ($k$-induction) of proof schemata~\cite{CDunchev2014,ALeitsch2017}. We will discuss this extension of Herbrand's theorem and how to apply it to a fragment of the formalism introduced in this work, i.e. the formalism equivalent to {\em primitive recursive arithmetic}.  

While $k$-induction lacks in expressive power when compared to theories such as \PA~\cite{GTakeuti1975}, there is ample evidence showing that it is provability-wise quite expressive\footnote{See Gentzen~\cite{GGentzen1969}, the $\mathcal{S}\mathit{i}\mathbf{LK}$-calculus Cerna\ \& Lettmann~\cite{DCerna2017a}, and Curry's formalization of primitive recursive arithmetic~\cite{HCurry1941}.}. However, the formalism is quantifier free over the numeric sort and severely restricts the  proof structure, which is of  interest to proof analysis and transformation. For example, the infinitary pigeonhole principle (IPP)~\cite{DCerna2015,DCerna2016}\footnote{The infinitary pigeonhole principle can be stated as follows: a total function $f:\mathbb{N}\rightarrow N'$ s.t. $N'\subset \mathbb{N}$ and finite is not injective. } can be elegantly formalized as a  proof schema using $\Pi_2$-cuts, though an important proof skeleton resulting from proof analysis, using atomic cuts only,  is beyond the representational power of the current formal language. Most of the analysis had to be done outside the existing methodological framework~\cite{DCerna2015}. Analysis of a restricted version of IPP~\cite{DCerna2016} resulted in a proof skeleton expressible by the formal language presented in~\cite{CDunchev2014}. However, this restriction was designed to allow analysis in the existing framework by removing the complex combinatorics from which the full IPP statement is entailed (see~\cite{DCerna2015,DCerna2016}). Once again, the most interesting part of the proof had to be removed to fit the  constraints of the formal system.
 
Other existing alternative formalisms for arithmetic~\cite{JBrotherston2005,JBrotherston2010,RMcdowell1997} were developed without a concern for proof analysis and thus lack the construction of proof normal forms with {\em subformula-like properties}\footnote{A proof fulfilling the subformula property can be referred to as {\em analytic}. By subformula-like, we mean that the proof is non-analytic, but still allows the extraction of objects important for proof analysis which require some form of analyticity.} and analytic tools applicable to proofs in a compressed state, i.e. prior to unrolling their recursive specification. 

A recently developed proof analysis method~\cite{ALeitsch2017} based on the schematic theorem proving work pioneered by V. Aravantinos et al.~\cite{VAravantinos2011,VAravantinos2013} manages to preserve the analyticity, usually associated with the subformula property, in the presence of propositional cuts. While in some sense the method is cut-elimination complete in that it can provide the substitutions needed to construct a Herbrand sequent, it relies on a schematic superposition prover~\cite{VAravantinos2013} which cannot deal with the complexities of most interesting $k$-inductive arguments. Even though $k$-induction is quite expressive in theory, in practice we cannot expect the method of~\cite{ALeitsch2017} to perform proof analysis on arbitrarily complex arithmetic statements. Also, the complexity of the formalism introduced in~\cite{ALeitsch2017} makes the experimental approach taken in~\cite{DCerna2016} quite a challenging endeavor, an approach which has shown promise~\cite{DCerna2015}. Moreover, the formalism is based on a single numeric index, inductive basecases are not allowed and heavy restrictions are placed on nested inductions. Though this is not necessarily a weakness~\cite{DCerna2017a,GGentzen1969}, it does remove quantification over inductive arguments, the essential difference between \PA and Primitive Recursive Arithmetic (\PRA)~\cite{HCurry1941}.

In this work we provide a formalism which is provability-wise at least as expressive as \textbf{PA} without restricting the structure of the proof or the inductive argument. We do so by constructing proof schemata over a well-founded ordering and allowing multiple free parameters. Multi-parameter schemata provide quantifier introduction over numeric terms without the loss of properties essential for schematic proof analysis, and thus allowing us to formalize strong totality statement\footnote{As noted in~\cite{SHetzl2017}, a single inductively introduced quantifier suffices for formalizing \PA.}. An essential proof analytic tool of~\cite{CDunchev2014}, schematic characteristic clause set, can be generalized to our formalism and insofar support the experimental approach of~\cite{DCerna2016}. Though, a corresponding schematic resolution calculus (as defined in~\cite{CDunchev2014})  has yet to be developed. We plan to address this in future work by applying the methods and techniques developed here to schematic resolution refutations. We expect such investigations to provide further insight into automated deduction for recursively defined formulas and clause sets and provide a theoretical foundation supporting investigation into interactive theorem proving methods for schematic resolution. Furthermore, proof analysis carried out in our formalism will in all likelihood provide interesting and complex examples (as it has done so far) fueling the research areas mentioned above.

Furthermore, proof schemata as presented in this work provide a formalism allowing  induction over arbitrary function symbols, not just the successor. While we do not take advantage of this property in this paper it is a result of the recursive formalization presented which does not restrict the terms passed through {\em links} other then requiring them to be ordered. This implies that one can consider using the system for arbitrary induction definitions, which has not been considered in previous work. Another note concerning the practical uses of our formulation is that it produces a non-trivial conservative reflection principle. For uses of reflection principles with respect to arithmetic see Parikh's results~\cite{RParikh1973,MBaaz2008b} for the monadic version of {\PA}, that is \textbf{PA}$^{*}$.  We foresee application of similar methodology to other problems and see our approach as a method of formulating logical relationships between a classical formulation of a theory and its schematic counterpart.

\section{From Proof Schema to $P$-schema}
\label{FromPStoNS}

In this section we introduce  the previous formalization of the schematic language and proof schemata,  introduce our new formalism of $P$-schema, and provide a comparison of the two formalisms.    

We work in a setting with the sort $\omega$ representing the natural numbers, $o$ representing bool. The language consists of countable sets of variables and sorted $n$-ary function and predicate symbols. We associate with every $n$-ary function $f$ a tuple of sorts $(\tau_1 , \ldots , \tau_n , \tau)$ with the interpretation $f : \tau_1 \times \ldots \times \tau_n \to \tau$, analogously we do the same for  predicate symbols.

Terms are built from variables and function symbols in the usual way. We assume the constant function symbols $0: \omega$ and $s: \omega \to \omega$ (zero and successor) to be present. Formulas are built as usual from atoms using the logical connectives $\neg, \land, \lor, \to, \forall, \exists$.

\subsection{Proof Schemata and the Schematic Language}
\label{sec:basic}
We now introduce \emph{proof schemata} as presented in~\cite{DCerna2017a,DCerna2017b,CDunchev2014,ALeitsch2017}. In addition to the sort of individual term $\iota$ we also need a sort of {\em numeric terms} denoted by $\omega$. However, we limit ourselves to predicate symbols with at most a single numeric index which we assume to be the left most argument. In later sections we consider predicates with multiple numeric indices. Numeric terms are ground terms $\mathcal{G}_{\mathbb{N}}$ constructed from  the alphabet $\lbrace 0 , s(\cdot) \rbrace$ with the addition of a free parameter denoted by $n$. By $\mathcal{V}(k)$, where $k$ is a numeric term, we denote the set of parameters used in $k$. Note that we will use lower-case  Greek characters $\alpha, \beta, \gamma$ to represent ground numeric terms. 

We work with a {\em schematic first-order language} allowing the specification of an (infinite) set of first-order formulas by a finite term. Therefore we allow \emph{defined function symbols}, i.e. primitive recursively defined functions, in the language. Analogously, we use {\em defined predicate symbols} to build {\em formula schemata}, a generalization of formulas including defined predicate symbols defined inductively using the standard logical connectives from uninterpreted and defined predicate symbols. Defined symbols will be denoted by $\hat{\cdot}$, i.e. $\hat{P}$. We assume a set of convergent rewrite rules $\mathcal{E}$ (equational theory) for defined function and predicate symbols. The rules of $\mathcal{E}$ are of the form $\widehat{f}(\bar{t}) = E$, where $\bar{t}$ contains no defined symbols, and either $\widehat{f}$ is a function symbol and $E$ is a term or $\widehat{f}$ is a predicate symbol and $E$ is a formula schema. The rules can be applied in both directions, i.e. the $\mathcal{E}$-rule is reversible. Such symbols can be defined  for both the $\omega$ and $\iota$ sort. 

\begin{example}
Iterated version of  $\lor$ and $\land$ operators ( the defined predicates are abbreviated 
as $\bigvee$ and $\bigwedge$ ) can be defined using the following equational theory: 
\begin{align*}
\bigvee_{i=0}^0P(i)=& P(0) & & \bigwedge_{i=0}^0P(i)=  P(0)\\
\bigvee_{i=0}^{s(y)}P(i)=&  \bigvee_{i=0}^{y}P(i)\lor P(s(y))& &
\bigwedge_{i=0}^{s(y)}P(i)= \bigwedge_{i=0}^{y}P(i)\land P(s(y)).
\end{align*}

We can also iterate function symbols of the $\iota$ sort as follows: 
\begin{align*}
It(s(n),a)= f(It(n,a)) & & It(0)= a
\end{align*}
where $f$ is a 1-ary function and $a$ a constant of the $\iota$.

\end{example}

A {\em schematic sequent} is a pair of multisets of formula sche-mata $\Delta$, $\Pi$ denoted by $\Delta \vdash \Pi$. We will denote multisets of formula schemata by upper-case Greek letters. We reuse this construction for $P$-schema. 

\begin{definition}[\LKE]
Let $\mathcal{E}$ be an equational theory. \LKE\ is an extension of \LK\ by the
$\mathcal{E}$ inference rule
\begin{small}\begin{prooftree}
\AxiomC{$S(t)$}
\RightLabel{$\mathcal{E}$}
\UnaryInfC{$S(t')$}
\end{prooftree}\end{small}
where the term or formula schema $t$ in the sequent $S$ is replaced by a term or formula schema $t'$ for 
$\mathcal{E}\models t= t'$.
\end{definition}

Let $S(\overline{x})$ be a sequent and $\overline{x}$ a vector of free variables, then $S(\overline{t})$ denotes $S(\overline{x})$ where $\overline{x}$ is replaced by $\overline{t}$, where $\overline{t}$ is a vector of terms of appropriate type. We denote by the following construction $${ \dotfrac{(\varphi,\overline{a})}{S(\overline{a})}}$$, where $S(\overline{x})$ is a schematic sequent and $\varphi$ a {\em proof symbol} from the countably infinite set of proof symbols $\mathcal{B}$, a so called {\em proof link}. Proof links are to be interpreted as $0$-ary inference rules acting as place holds for proofs. The sequent calculus {\LKS} consists of the rules of {\LKE} where the leaves of the proof tree can be axioms or  proof links.

 Proof schemata are a sequence of \emph{proof schema components} with a restriction on the usage of proof links. Note that proofs may have free parameters, that is variables of the numeric sort. In this section, we limit the number of parameters to one and show how this restriction can be lifted. Let $\nu$ be an {\LKS}-proof containing a free parameter $n$. By $\nu(k)$ we denote the {\LKS}-proof derived from $\nu$ by replacing the free parameter with a numeric expression $k$ such that. whenever an {\LKS}-proof $\nu$ contains a free parameter we write $\nu(n)$ or $\nu(k)$ where $k$ is a numeric expression, otherwise we write $\nu$. 

\begin{definition}[Proof schema component]
Let $\psi$ be a proof symbol and $k$ an expression with free variable $n$. A \emph{proof schema
component} $\mathbf{C}$  is a triple $(\psi,\pi ,\nu(k))$  where $\pi$ is an \LKS-proof without proof links and $\nu(k)$ is an \LKS-proof containing proof links and $k$ is a numeric expression such that $\mathcal{V}(k)\subseteq \left\lbrace n \right\rbrace$. The end-sequents of the proofs are $S(0,\overline{x})$ and $S(k,\overline{x})$, respectively. Given a proof schema component $\mathbf{C}=(\psi,\pi ,\nu(k))$ we define $\mathbf{C}.1 = \psi$, $\mathbf{C}.2 = \pi$, and  $\mathbf{C}.3 = \nu(k)$.
\end{definition}

\begin{definition}
Let $\mathbf{C}$ and $\mathbf{D}$ be proof schema components such that $\mathbf{C}.1$ is distinct from $\mathbf{D}.1$. We say $\mathbf{C}\succ\mathbf{D}$ if there are no links in $\mathbf{D}.2$ and $\mathbf{D}.3$ to $\mathbf{C}.1$ and all links in $\mathbf{C}.2$ and $\mathbf{C}.3$ to $\mathbf{D}.1$ are of the following form: 
\begin{center}
\begin{minipage}{.20\textwidth}
\begin{small}
\begin{prooftree}
\AxiomC{$(\mathbf{C}.1, k',\overline{r})$}
\dashedLine
\UnaryInfC{$S(k',\overline{r})$}
\end{prooftree}
\end{small}
\end{minipage}
\begin{minipage}{.24\textwidth}
\begin{small}
\begin{prooftree}
\AxiomC{$(\mathbf{D}.1, t,\overline{r})$}
\dashedLine
\UnaryInfC{$S'(t,\overline{r})$}
\end{prooftree}
\end{small}
\end{minipage}
\end{center}
for $t$ s.t. $\mathcal{V}(t)\subseteq \left\lbrace n \right\rbrace$, $k'$ is a sub-term of $k$, and $\overline{r}$ is a vector of terms of the appropriate sort. $S(x)$ 
and $S'(x)$ are the end sequents of components $\mathbf{C}_1$ and 
$\mathbf{C}_2$, respectively. Let $\Psi$ be a set of proof schema components. We say $\mathbf{C}\succ \mathbf{D}$ if $\mathbf{C}\succ_{\Psi}\mathbf{D}$ and $\mathbf{C}\succ_{\Psi} \mathbf{D}$ holds for all proof schema components $\mathbf{E}$ of $\Psi$ with $\mathbf{D}\succ_{\Psi}\mathbf{E}$.
\end{definition}

\begin{definition}[Proof schema \cite{CDunchev2014}]
Let $\mathbf{C}_1,\cdots, \mathbf{C}_m$ be a sequence proof schema components s.t the
$\mathbf{C}_i.1$ are distinct. Let the  end sequents of $\mathbf{C}_1$ be $S(0,\overline{x})$ and $S(k,\overline{x})$. We define  $\Psi 
= \left\langle \mathbf{C}_1, \cdots, \mathbf{C}_m \right\rangle $ as a 
{\em proof schema} if $\mathbf{C}_1\succ_{\Psi}\ldots\succ_{\Psi}\mathbf{C}_m$. We call $S(k,\overline{x})$ the end sequent 
of $\Psi$. 
\end{definition}

\begin{example}[Proof schema]
  Let us define a proof schema $\left\langle (\varphi, \pi, \nu(k)) \right\rangle $ with end se- quent (schema)
$$P(0), \bigwedge_{i=0}^n (P(i)\to P(i+1))\vdash P(n+1).$$
using the equational theory:
\[\mathcal{E} = \left\lbrace  \begin{array}{l}\bigwedge_{i=0}^0 P(i)\to P(i+1)= P(0)\rightarrow  P(1)  ; \\\\ \bigwedge_{i=0}^{n+1} P(i)\to P(i+1)= \\ P(n+1)\rightarrow  P(n+2) \wedge \bigwedge_{i=0}^n P(i)\to P(i+1)\end{array}\right\rbrace. \]
$\pi$ is as follows:
\begin{small}
\begin{prooftree}
  \AxiomC{$P(0)\vdash P(0)$}
  \AxiomC{$P(1)\vdash P(1)$}
  \RightLabel{$\to\colon l$}
  \BinaryInfC{$P(0), P(0)\to P(1) \vdash P(1)$}\RightLabel{$\mathcal{E}$}
  \UnaryInfC{$P(0), \bigwedge_{i=0}^0 P(i)\to P(i+1) \vdash P(1)$}
\end{prooftree}
\end{small}
$\nu(k)$ is as follows where $k=n+1$:

\begin{prooftree}
  \AxiomC{ $(\nu_1 (k))$}
  \noLine
  \UnaryInfC{$S(\nu_1 (k))$}
  \AxiomC{ $(\nu_2 (k))$}
  \noLine
  \UnaryInfC{$S(\nu_2 (k))$}
\RightLabel{$cut$}
  \BinaryInfC{$P(0), \bigwedge_{i=0}^n (P(i)\to P(i+1)), P(n+1)\to P(n+2)\vdash {P(n+2)} $}
  \UnaryInfC{$\ldots \wedge\colon l$ and $c\colon l\ldots$}
  \UnaryInfC{$P(0), \bigwedge_{i=0}^{n+1} (P(i)\to P(i+1))\vdash {P(n+2)} $}
\end{prooftree}
where 
$$S(\nu_1 (k)) \equiv P(0), \bigwedge_{i=0}^n (P(i)\to P(i+1))\vdash {P(n+1)},$$
$$S(\nu_2 (k)) \equiv P(n+1), P(n+1) \to P(n+2) \vdash P(n+2),$$
$\nu_1 (k)$ is
\begin{small}
\begin{prooftree}
	\AxiomC{ $\dotfrac{\varphi(n)}{ P(0), \bigwedge_{i=0}^n (P(i)\to P(i+1))\vdash {P(n+1)}}{\color{white}\Bigg\downarrow}$}
\end{prooftree}
\end{small}
and $\nu_2 (k)$ is
\begin{small}
\begin{prooftree}
	\AxiomC{$P(n+1) \vdash P(n+1)$}
	\AxiomC{$P(n+2) \vdash P(n+2)$}
	\BinaryInfC{ $P(n+1), P(n+1) \to P(n+2) \vdash P(n+2)$}
\end{prooftree}
\end{small}

\end{example}

\subsection{The $\mathbf{P}$-schema Formalism}
We extend the formalism defined in the previous section to construct {\em $\mathbf{P}$-schemata}. Other than the ground numeric terms we also have three types of parameters: {\em active} parameters $\mathcal{N}_{a}$, {\em passive} parameters $\mathcal{N}_{p}$, and {\em internal} parameters $\mathcal{N}_{i}$\footnote{A related terminology can be found in~\cite{HSimmons1988} which discusses a construction similar to ours.}. Intuitively, active parameters are the parameters associated with recursive construction, passive parameters are associated with a prior recursive construction and, internal parameters are used for auxiliary arguments to a recursive construction. Note that no changes are made to the $\iota$ sort concerning this generalization.

 We denote the active parameters by lower-case Latin characters $n,m,k$, passive parameters by lower-case bold Greek characters $\fat{\alpha}, \fat{\beta}, \fat{\gamma}$, and internal parameters by lower-case bold Latin characters  $\mathbf{n},\mathbf{m},\mathbf{k}$. We define the set of numeric terms containing these parameters as   $\mathcal{A}_{\mathbb{N}}$, $\mathcal{P}_{\mathbb{N}}$, $\mathcal{I}_{\mathbb{N}}$, respectively. In general, the set of schematic terms will be denoted by $\mathcal{S}_{\mathbb{N}} = \mathcal{A}_{\mathbb{N}}\cup \mathcal{P}_{\mathbb{N}}\cup \mathcal{G}_{\mathbb{N}}\cup \mathcal{I}_{\mathbb{N}}$. To simplify reading, we will denote the successor of an active parameter $n$ by $n'$ rather than $s(n)$. 

Concerning quantification, one ought to consider passive parameters as {\em eigenvariables}. Both active and internal parameters as well as terms which contain them cannot be quantified. These parameters play a computational role and thus quantification of these symbols can lead to unsound derivation.

We will be mainly concerned with schematic sequents $S$ referred to as $(n,\mathcal{I})$-sequent, where $n$ is an active parameter and $\mathcal{I}$ is a set of internal parameters, where the free active and internal parameters of $S$ are $n$ and $\mathcal{I}$, respectively. A sequent without active parameters will be referred to as an $\mathcal{I}$-sequent. We do not consider sequents with more than one active parameter. Our primitive recursive formalization requires pairing the end sequent of a base case proof (an $\mathcal{I}$-sequent) with the end sequent of a step case proof (an $(n,\mathcal{I})$-sequent); we call these pairs of sequents {\em inductive pairs}. For example, given an $(n,\mathcal{I})$-sequent $S$ an  inductive pair based on $S$ would be ($S$, $S\lbrace n\leftarrow \alpha \rbrace$) where $\alpha \in \mathcal{G}_{\mathbb{N}}$.

We consider an extension of the \LKE-calculus~\cite{CDunchev2014} over the extended term language defined above, which we refer to as the  {\em multivariate} \LKE-calculus (\mvLKE-calculus). By multivariate we are referring to the three types of variable symbols. Before moving on to the definition of $\mathbf{P}$-schemata we further distinguish the three types of parameters. By
active parameter we are referring to the parameter over which the induction is performed. Every \mvLKE-proof has at most one distinct active parameter. Passive parameters are used to mark already occurring inductions, or as mentioned before, act as  eigenvariables. There is no limit to the number of passive parameters  in a given proof. Internal parameters are used to pass information through the ``links'' which we will discuss shortly. Thus, an \mvLKE-derivation can end with a schematic sequent containing internal parameters, but an \mvLKE-proof ends with a schematic sequent without internal or active parameters. The end-sequent of an \mvLKE-proof (\mvLKE-derivation) $\varphi$ will be denoted as $es(\varphi)$ and the set $\mathcal{V}_{x}(S)$ for $x\in \lbrace a, p, i\rbrace$ will denote the active, passive and internal parameters occurring in the sequent $S$, respectively. Notice that the calculus introduced so far cannot construct \mvLKE-proofs unless the entire derivation is active and internal parameter free. To deal with this issue we introduce the concept of schematic proof (derivation) for the \mvLKE-calculus. 

As in Section \ref{sec:basic}, we define a schematic proof (derivation) as a finite set of {\em components} which can ``link'' together using {\em links} defined analogously to proof links. Note any type of parameter is allowed in the arguments of the links.    
Furthermore, we assume a countably infinite set $\mathcal{B}$\footnote{Think of the German word $\mathcal{B}$eweis meaning proof.} of \emph{proof symbols} denoted by $\varphi ,\psi ,\varphi_i,\psi_j$. 

\begin{definition}[\mvLKS]
The \mvLKS-calculus is an extension of \mvLKE, where links may appear at the leaves of a proof (derivation). 
\end{definition}

Note that \mvLKS-proofs do not have to be valid by construction and require an external soundness condition. This soundness condition is provided by the $\mathbf{P}$-schema construction.  Also, we will refer to an \mvLKS-derivation as {\em inactive} if it does not contain an active parameter and $\lbrace n \rbrace$-active if it contains only the active parameter $n$.

\begin{example} \label{ex.running1} Consider the following  $\mathcal{E}$ theory 
\[\mathcal{E} = \left\lbrace \widehat{a}(s(n),\beta) =s(\widehat{a}(n,\beta)) ; \ \widehat{a}(0,\beta) = \beta \right\rbrace. \]

Where $\hat{a}(\cdot, \cdot)$ represents addition.
Let $\pi =$
\begin{small}
\begin{prooftree}
\AxiomC{$  \vdash 0 = 0$}
\RightLabel{$\mathcal{E}$}
\UnaryInfC{$ \vdash \hat{a}(0,0) =0$}
\RightLabel{$\mathcal{E}$}
\UnaryInfC{$ \vdash \hat{a}(0,0) =\hat{a}(0,0)$}
\end{prooftree}
\end{small}
and let $\nu =$
\begin{small}
\begin{prooftree} 
\AxiomC{$\chi(n)$}
\dottedLine
\UnaryInfC{$\vdash \hat{a}(n,0) =\hat{a}(0,n)$}
\AxiomC{$S_1(\nu_1 )$}
\UnaryInfC{$\vdots$}
\RightLabel{$cut$}
\BinaryInfC{$\vdash  \hat{a}(n',0) =\hat{a}(0,n')$}
\end{prooftree}
\end{small}
where $S_1 (\nu_1 ) \equiv \hat{a}(n,0) =\hat{a}(0,n) \vdash s(\hat{a}(n,0)) =s(\hat{a}(0,n))$. $\pi$ is an \mvLKS-proof and $\nu$ is an  \mvLKS-derivation. Also,  $\nu$ contains a link to the proof symbol $\chi$ and is $\lbrace n \rbrace$-active. Note that the end-sequent of $\nu$, $es(\nu)$ is $es(\chi(n))\{n \leftarrow n'\}$. Moreover, $es(\nu)$ is an $(n,\emptyset)$-sequent, i.e. internal parameter free. Its inductive pair also contains the sequent $es(\nu)\{n \leftarrow 0\} = \ \vdash \hat{a}(0,0) = \hat{a}(0,0)$, the end-sequent of $\pi$ (an $\emptyset$-sequent). The triple $(\chi, \pi_1 , \nu_1)$ is referred to as an $(n,\emptyset)$-component. 
\end{example}

\begin{definition}[$(n,\mathcal{I})$-component]
Let $\psi\in \mathcal{B}$, $n\in \mathcal{N}_a$ and $\mathcal{I}\subset \mathcal{N}_i$. An \emph{$(n,\mathcal{I})$-component} $\mathbf{C}$  is a triple $(\psi,\pi ,\nu)$  where $\pi$ is an inactive \mvLKS-derivation ending with $S\{n \leftarrow \alpha\}$ and $\nu$ is an $\lbrace n\rbrace$-active \mvLKS-derivation ending in an $(n,\mathcal{I})$-sequent $S$, the inductive pair of $S\{n \leftarrow 0\}$. Given a 
component $\mathbf{C}=(\psi,\pi ,\nu)$ we define $\mathbf{C}.1 = \psi$, 
$\mathbf{C}.2 = \pi$, 
and  $\mathbf{C}.3 = \nu$. We refer to $es(\mathbf{C}) = S$ as the {\em end sequent} of the  component $\mathbf{C}$.
\end{definition}
When the extra terms are not necessary for understanding, we will refer to an $(n,\mathcal{I})$-component as a component.

A schematic proof is defined over finitely many components, that are connected by links. This means, that a component may contain links to other components. So far, there is no restriction on the usage of links, which must obey some conditions in order to preserve soundness. In fact, whenever a component $\mathbf{C}$ contains a link to another component $\mathbf{D}$, it has to be ensured that the passive parameters occurring in $es(\mathbf{D})$ occur in the sequent associated with the link in $\mathbf{C}$ as well. This condition is formalized in the following definition. 

\begin{definition}[Linkability]
Two components  $\mathbf{C}$ and $\mathbf{D}$ are said to be {\em $(\mathbf{C},\mathbf{D}$)-linkable} if for each sequent $S$ in $\mathbf{C}$ that is associated with a proof link to $\mathbf{D}$ it holds that $\mathcal{V}_{p}(es(\mathbf{D})) \subseteq \mathcal{V}_{p}(S)$. We say they are {\em strictly $(\mathbf{C},\mathbf{D}$)-linkable} if it holds that $\mathcal{V}_{p}(es(\mathbf{D})) \subseteq \mathcal{V}_{p}(es(\mathbf{C}))$. 
\end{definition}

The restriction on linkable components is used to define an ordering on the components occurring in a schematic proof.
\begin{definition}[Linkability ordering]
\label{def:ordering}
Let $\mathbf{C}_1$  and $\mathbf{C}_2$ be distinct components such that they are $(\mathbf{C}_1,\mathbf{C}_2)$-linkable. Then we say that $\mathbf{C}_1 \prec \mathbf{C}_2$. If $\mathbf{C}_1$ and $\mathbf{C}_2$ are strictly $(\mathbf{C}_1,\mathbf{C}_2)$-linkable, we say that $\mathbf{C}_1 \prec_s \mathbf{C}_2$.
\end{definition}

When constructing recursive proofs as introduced in this paper avoiding mutual recursion is essential being that unfolding of the proof can produce infinite cycles, i.e. infinitely large proof trees. The restriction on the number of active parameters per sequent deals with mutual recursion and the linkability ordering provides a method to deal with infinite cycles\footnote{We do not consider proof by infinite descent~\cite{JBrotherston2010}.}. However, the linkability ordering alone does not prevent infinite cycles; it is defined over the set of all components and allows the definition of mutually linkable components, i.e. $(\mathbf{C}_1,\mathbf{C}_2)$-linkable and $(\mathbf{C}_2,\mathbf{C}_1)$-linkable components $\mathbf{C}_1,\mathbf{C}_2$. To avoid this problem we define $\mathbf{P}$-schemata over a subordering of the linkability ordering which is well founded. In this work we restrict this well founded suborder to a finite set of objects, as this suffices for the presented results. However, a generalization to more complex well orderings is possible and is worth future investigation.  

\begin{definition}[$\mathbf{P}$-schema]
Let $\mathbf{P} \subset \mathcal{N}_p$, $\mathbf{C}_1$ an $(n,\mathcal{I})$-component and $\mathbf{C}_2,\cdots, \mathbf{C}_\alpha$ components such that for all $1\leq i\leq \alpha$, 
$\mathbf{C}_i.1$ are distinct and $\mathcal{V}_{p}(\mathbf{C}_i)\subseteq \mathbf{P}$. We define  $\Psi = \left\langle \mathbf{C}_1, \cdots, \mathbf{C}_\alpha \right\rangle $ as a {\em $\mathbf{P}$-schema} ({\em strict $\mathbf{P}$-schema}) over a well founded suborder  $\prec^*\subset \prec$ ( $\prec_{s}^*\subset \prec_{s}$) of $\left\lbrace \mathbf{C}_1,\ldots,\mathbf{C}_\alpha\right\rbrace$ with $\mathbf{C}_1$ as  least element. We define $|\Psi|= \alpha$, $\Psi.i = \mathbf{C}_i$ for $1\leq i \leq \alpha$, and $es(\Psi) = es(\mathbf{C}_1)$. 
\end{definition}
\begin{definition}[sub $\mathbf{P}$-schema]
Let \ $\Psi = \left\langle \mathbf{C}_1, \cdots, \mathbf{C}_\alpha \right\rangle $ be a  $\mathbf{P}$-schema and $\Psi' = \left\langle \mathbf{C}_1', \cdots, \mathbf{C}_\beta' \right\rangle $ be a  $\mathbf{P}$-schema such that $\Psi' \subseteq \Psi$. We refer to $\Psi'$ as a {\em sub $\mathbf{P}$-schema} of $\Psi$ if the following hold: 
\begin{itemize}
\item[(1)] $\bigcup_{C\in \Psi} \mathcal{V}_{a}(es(C)) \cap \bigcup_{C\in \Psi'} \mathcal{V}_{a}(es(C)) = \emptyset$
\item[(2)] If some component $C\in \Psi$ links to a component $C'\in \Psi'$, substituting  $n\in \mathcal{V}_{a}(es(C'))$ by a term $t$, then $\mathcal{V}_{a}(t) \cap \bigcup_{C\in \Psi\setminus \Psi'} \mathcal{V}_{a}(es(C)) = \emptyset$ and $\mathcal{V}_{p}(t) \cap \mathcal{V}_{p}(es(\Psi))$ $=\emptyset$, and $\mathcal{V}_{i}(t)=\emptyset$.  
\end{itemize}
\end{definition}
 
 We refer to a sub $\mathbf{P}$-schema as {\em computational} if condition (2) is strengthen to ``. . . substituting  $n\in \mathcal{V}_{a}(es(C'))$ by a term $t\not \in \mathcal{G}_{N}$ . . . ''

\begin{example}
\label{example.1}
Using basic equational reasoning we can formalize  associativity of addition as an $\lbrace\fat{\alpha},\fat{\beta},\fat{\gamma}\rbrace$-schema $\Phi = \left\langle (\varphi,\pi,\nu ) \right\rangle$ over the following  $\mathcal{E}$ theory 
\[\mathcal{E} = \left\lbrace \widehat{a}(s(n),\beta) =s(\widehat{a}(n,\beta)) ; \ \widehat{a}(0,\beta) = \beta \right\rbrace. \]

$\pi =$
\begin{small}
\begin{prooftree}
\AxiomC{$\begin{array}{c} \vdash  \hat{a}(\mathbf{k},\fat{\gamma}) =\hat{a}(\mathbf{k},\fat{\gamma}) \end{array}$}
\RightLabel{$\mathcal{E}$}
\UnaryInfC{$\begin{array}{c} \vdash  \hat{a}(0,\hat{a}(\mathbf{k},\fat{\gamma})) =\hat{a}(\mathbf{k},\fat{\gamma})  \end{array}$}
\RightLabel{$\mathcal{E}$}
\UnaryInfC{$\begin{array}{c} \vdash  \hat{a}(0,\hat{a}(\mathbf{k},\fat{\gamma})) =\hat{a}(\hat{a}(0,\mathbf{k}),\fat{\gamma})  \end{array}$}
\end{prooftree}
\end{small}
$\nu=$
\begin{small}
\begin{prooftree}
\AxiomC{$\begin{array}{c} \varphi(n,\mathbf{k},\fat{\gamma}) \end{array}$}
\dottedLine
\UnaryInfC{$\begin{array}{c} \vdash  \hat{a}(n,\hat{a}(\mathbf{k},\fat{\gamma})) =\hat{a}(\hat{a}(n,\mathbf{k}),\fat{\gamma})  \end{array}$}
\AxiomC{$(\nu_1)$}
\noLine
\UnaryInfC{$S(\nu_1)$}
\RightLabel{$cut$}
\BinaryInfC{$\begin{array}{c} \vdash   \hat{a}(n',\hat{a}(\mathbf{k},\fat{\gamma})) =\hat{a}(\hat{a}(n',\mathbf{k}),\fat{\gamma})\\ \end{array}$}
\end{prooftree}
\end{small}
where $S(\nu_1 ) \equiv \hat{a}(n,\hat{a}(\mathbf{k},\fat{\gamma})) = \hat{a}(\hat{a}(n,\mathbf{k}),\fat{\gamma}) \vdash \hat{a}(n',\hat{a}(\mathbf{k},\fat{\gamma})) = \hat{a}(\hat{a}(n',\mathbf{k}),\fat{\gamma})$ and $\nu_1$ is
\begin{small}
\begin{prooftree}
\AxiomC{$\begin{array}{c}  \hat{a}(n,\hat{a}(\mathbf{k},\fat{\gamma})) =\hat{a}(\hat{a}(n,\mathbf{k}),\fat{\gamma})\vdash \\  s(\hat{a}(n,\hat{a}(\mathbf{k},\fat{\gamma}))) =s(\hat{a}(\hat{a}(n,\mathbf{k}),\fat{\gamma})) \end{array}$}
\RightLabel{$\mathcal{E}$}
\UnaryInfC{$\begin{array}{c}  \hat{a}(n,\hat{a}(\mathbf{k},\fat{\gamma})) =\hat{a}(\hat{a}(n,\mathbf{k}),\fat{\gamma})\vdash \\  \hat{a}(n',\hat{a}(\mathbf{k},\fat{\gamma})) =s(\hat{a}(\hat{a}(n,\mathbf{k}),\fat{\gamma})) \end{array}$}
\RightLabel{$\mathcal{E}$}
\UnaryInfC{$\begin{array}{c}  \hat{a}(n,\hat{a}(\mathbf{k},\fat{\gamma})) =\hat{a}(\hat{a}(n,\mathbf{k}),\fat{\gamma})\vdash  \\ \hat{a}(n',\hat{a}(\mathbf{k},\fat{\gamma})) =\hat{a}(s(\hat{a}(n,\mathbf{k})),\fat{\gamma}) \end{array}$}
\RightLabel{$\mathcal{E}$}
\UnaryInfC{$\begin{array}{c}  \hat{a}(n,\hat{a}(\mathbf{k},\fat{\gamma})) =\hat{a}(\hat{a}(n,\mathbf{k}),\fat{\gamma})\vdash \\  \hat{a}(n',\hat{a}(\mathbf{k},\fat{\gamma})) =\hat{a}(\hat{a}(n',\mathbf{k}),\fat{\gamma}) \end{array}$}
\end{prooftree}
\end{small}

Notice that $\nu$ is an \mvLKS-derivation not an \mvLKS-proof being that the end sequent of $\nu$ is $\lbrace n \rbrace$-active.  We can extend $\Phi$ to $\Phi^* = \left\langle (\chi,\lambda,\mu ) , (\varphi,\pi,\nu ) \right\rangle$ where 

\begin{minipage}{.2\textwidth}
$\lambda =$
\end{minipage}
\begin{minipage}{.8\textwidth}
\begin{prooftree}
\AxiomC{$\begin{array}{c} \varphi(0,\fat{\beta},\fat{\gamma}) \end{array}$}
\dottedLine
\UnaryInfC{$\begin{array}{c} \vdash   \hat{a}(0,\hat{a}(\fat{\beta},\fat{\gamma})) =\hat{a}(\hat{a}(0,\fat{\beta}),\fat{\gamma}) \end{array}$}
\end{prooftree}
\end{minipage}

\begin{minipage}{.2\textwidth}
$\mu =$
\end{minipage}
\begin{minipage}{.8\textwidth}
\begin{prooftree}
\AxiomC{$\begin{array}{c} \varphi(\fat{\alpha},\fat{\beta},\fat{\gamma}) \end{array}$}
\dottedLine
\UnaryInfC{$\begin{array}{c} \vdash   \hat{a}(\fat{\alpha},\hat{a}(\fat{\beta},\fat{\gamma})) =\hat{a}(\hat{a}(\fat{\alpha},\fat{\beta}),\fat{\gamma}) \end{array}$}
\end{prooftree}
\end{minipage}

The resulting schema $\Phi^*$ ends with an \mvLKS-proof and thus constructs an infinite sequence of \mvLKS-proof.  
\end{example}

 Note that this formalization is a generalization of the formalization described in~\cite{ALeitsch2017}. For example, if we were only to use $(n, \mathcal{I} )$-components and construct only strict  $\lbrace\fat{\alpha}\rbrace$-schema, the resulting proof structure would be equivalent to first-order proof schemata with a restricted $\iota$ sort, i.e. restricted to numerals. Even though the $\lbrace\fat{\alpha},\fat{\beta},\fat{\gamma}\rbrace$-schema $\Phi$ provided in Example~\ref{example.1} has an $\lbrace n \rbrace$-active end sequent with a free internal parameter, these are nothing more than the free parameter and a free variable of the $\iota$ sort as discussed in~\cite{ALeitsch2017} and thus, this example is easily expressible within that formalization. We can extend this example to a proof of commutativity which is beyond the expressive power of previous formalizations. Note that to prove commutativity we need to allow axioms of the equational extension of the \LK-calculus in our \LKS-proofs. In Chapter 1.7 of \cite{GTakeuti1975} such an extension is referred to as the \LK$_{e}$-calculus calculus and thus we refer to our extension as the \LKS$_{e}$-calculus. The equational axiom schemes are the following: 
 \begin{align*}
 \ \vdash\ & s = s \\ 
  s=t\ \vdash\ & s(s) =  s(t)\\
 s_{1}=t_{1},\cdots , s_{k}=t_{k} \ \vdash\ & \hat{f}(s_{1},\cdots, s_{k},\overline{r}) =  \hat{f}(t_{1},\cdots, t_{k},\overline{r})\\
  s_{1}=t_{1},\cdots, s_{k}=t_{k},\ R(s_{1},\cdots, s_{k},\overline{r}) \ \vdash\ &   R(t_{1},\cdots, t_{k},\overline{r})\\
 \end{align*}
 where $s,s_{1},\cdots, s_k,t,t_1,\cdots t_k$ are numeric terms, $\overline{r}$ is a vector of terms of the individual sort,  $\hat{f}$ is a defined function symbols, $R$ is either a defined predicate symbol or predicate symbol, and $s(\cdot)$ is the successor of the numeric sort. 
\begin{example}
\label{ex:two}
We use the same $\mathcal{E}$ theory as presented in Example~\ref{example.1} and extend the 
$\lbrace \fat{\alpha}, \fat{\beta}, \fat{\gamma}\rbrace$-schema of Example~\ref{example.1} to the $\lbrace\fat{\alpha}, \fat{\beta}\rbrace$-schema
$$\Phi' = \left\langle (\chi,\pi_1,\nu_1 ) ,(\psi,\pi_2,\nu_2 ), (\xi,\pi_3,\nu_3 ) (\varphi,\pi,\nu ) \right\rangle$$
using the following equational axioms:
\begin{align*}
E_1 \equiv & \hat{a}(\fat{\alpha},1) = \hat{a}(1,\fat{\alpha}) \vdash \hat{a}(\hat{a}(\fat{\alpha},1),n) = \hat{a}(\hat{a}(1,\fat{\alpha}),n)\\
E_2 \equiv & \hat{a}(\hat{a}(1,\fat{\alpha}),n) = \hat{a}(n',\fat{\alpha}), \hat{a}(\hat{a}(\fat{\alpha},1),n) = \hat{a}(\hat{a}(1,\fat{\alpha}), n) \vdash 
            \hat{a}(\hat{a}(\fat{\alpha},1),n) = \hat{a}(n',\fat{\alpha})\\
E_3 \equiv & \hat{a}(\fat{\alpha},n) =\hat{a}(n,\fat{\alpha}) \vdash s(\hat{a}(\fat{\alpha},n)) =s(\hat{a}(n,\fat{\alpha}))\\
E_4 \equiv & \hat{a}(\fat{\alpha}, \hat{a}(1,n)) = \hat{a}(\hat{a}(\fat{\alpha},1),n) , \hat{a}(\hat{a}(\fat{\alpha},1),n) = \hat{a}(n',\fat{\alpha})\vdash 
            \hat{a}(\fat{\alpha}, \hat{a}(1,n)) = \hat{a}(n',\fat{\alpha})
\end{align*}
Note the these equational axioms are either instances of the axiom schemes of $\LKS_{e}$ or derivable from them. We define 
$\pi_1$ and $\nu_1$  as in Example \ref{ex.running1}. $\pi_2 =$
\begin{small}
\begin{prooftree}
\AxiomC{$\vdash s(0)= s(0)$}
 \UnaryInfC{$    \vdash  \hat{a}(0,s(0))= s(0)$}
\RightLabel{$\mathcal{E}$}
\UnaryInfC{$    \vdash  \hat{a}(0,s(0))=  s(\hat{a}(0,0))$}
\RightLabel{$\mathcal{E}$}
\UnaryInfC{$    \vdash  \hat{a}(0,s(0))= \hat{a}(s(0),0)$}
\end{prooftree}
\end{small}
$\nu_2=$
\begin{small}
\begin{prooftree}
\AxiomC{$\psi(n)$}
\dottedLine
\UnaryInfC{$\vdash  \hat{a}(n,1)= \hat{a}(1,n)$}
\AxiomC{$\hat{a}(n,1)= \hat{a}(1,n) \vdash s(\hat{a}(n,1))= s(\hat{a}(1,n))$}
\RightLabel{$cut$}
\BinaryInfC{$\vdots$}
\UnaryInfC{$\vdash \hat{a}(n',1)= \hat{a}(1,n')$}
\end{prooftree}
\end{small}
 $\pi_3 = $
\begin{small}
\begin{prooftree}
\AxiomC{$ \chi(\fat{\alpha})$}
\dottedLine
\UnaryInfC{$ \vdash \hat{a}(\fat{\alpha},0) =\hat{a}(0,\fat{\alpha})$}
\end{prooftree}
\end{small}
$\nu_3=$
\begin{prooftree}
\AxiomC{$\xi(n,\fat{\alpha})$}
\dottedLine
\UnaryInfC{$    \vdash  \hat{a}(\fat{\alpha},n) =\hat{a}(n,\fat{\alpha}) $}
\UnaryInfC{$ \vdots $}
\AxiomC{$\psi(\fat{\alpha})$}
\dottedLine
\UnaryInfC{$\vdash    \hat{a}(\fat{\alpha},1) = \hat{a}(1,\fat{\alpha})$}
\UnaryInfC{$\vdots$}
\RightLabel{$cut$}
\BinaryInfC{$\vdots$}
\AxiomC{$\varphi(\fat{\alpha},1,n)$}
\dottedLine
\UnaryInfC{$\vdash \hat{a}(\fat{\alpha}, \hat{a}(1,n)) = \hat{a}(\hat{a}(\fat{\alpha},1),n)$}
\RightLabel{$cut$}
\BinaryInfC{$\vdash \hat{a}(\fat{\alpha}, \hat{a}(1,n)) = \hat{a}(n',\fat{\alpha})$}
\RightLabel{$\mathcal{E}$}
\UnaryInfC{$\vdash \hat{a}(\fat{\alpha}, s(\hat{a}(0,n))) = \hat{a}(n',\fat{\alpha})$}
\RightLabel{$\mathcal{E}$}
\UnaryInfC{$\vdash \hat{a}(\fat{\alpha}, n') = \hat{a}(n',\fat{\alpha})$}
\end{prooftree}

Notice that $\xi$ is the least element of the order $\prec$ and the following relations concerning $\prec$ are  also defined: $\xi\prec\varphi$, $\xi\prec\psi$, $\psi\prec\chi$. Also, once again $\Phi'$ defines a
\mvLKS-derivation rather than a proof, however, we can perform a similar extension as before to construct a proof. Furthermore we can quantify the passive parameters of the schema and derive the precise statement of commutativity as one would derive in \PA. 
\end{example}

The derivation outlined in Example~\ref{ex:two} differs from the derivations presented in previous work in two major ways: the base cases of the components are allowed to link to other proofs in the schema, and links to components which do not contain the free parameter is sensible. What we mean by the latter remark is that a link to a component with out a free parameter, using the formalism of Section~\ref{sec:basic}, could easily be removed given that it is a link to a non-schematic proof. The active parameter free links in Example~\ref{ex:two} cannot be removed because doing so introduces a second active parameter thus violating the construction. This introduces the difference between active and passive. While the free parameters of Section~\ref{sec:basic} allow us to mimic induction recursively, they do not allow one to introduce lemmata as is done in Example~\ref{ex:two} nor do they allow for basecases dependent on the lemmata. The passive active distinction allows us to circumvent this issue by marking the end of an inductive argument. 

We can formalize the construction of an \mvLKS-proof from an \mvLKS-derivation as follows:

\begin{theorem}
Let  $\Phi =\left\langle \mathbf{C}_1, \cdots, \mathbf{C}_\alpha \right\rangle $  be a $\mathbf{P}$-schema resulting in an  \mvLKS-derivation such that $0<|\mathcal{V}_{i}(es(\Phi))|+|\mathcal{V}_{a}(es(\Phi))|$ and $\mathcal{V}_{p}(es(\Phi))|+ |\mathcal{V}_{i}(es(\Phi))|+|\mathcal{V}_{a}(es(\Phi))| \leq |P|$. Then there exists  $\Phi' =\left\langle \mathbf{C} , \mathbf{C}_1, \cdots, \mathbf{C}_\alpha \right\rangle $ resulting in an \mvLKS-proof.
\end{theorem}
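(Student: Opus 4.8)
The plan is to prove the statement constructively, by \emph{capping off} the recursion of $\Phi$ with a single new least component $\mathbf{C}$ that instantiates every active and internal parameter occurring in $es(\Phi)=es(\mathbf{C}_1)$, turning them either into $0$ or into passive parameters (which act as eigenvariables). Concretely, $\mathbf{C}$ will link to the current least element $\mathbf{C}_1$ in both of its parts, exactly as the component $(\chi,\lambda,\mu)$ was prepended to $\Phi$ in Example~\ref{example.1}; the end sequent of $\mathbf{C}$ will then carry only passive parameters, so that $es(\Phi')=es(\mathbf{C})$ is the end sequent of an \mvLKS-proof.

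First I would fix notation. Write $S=es(\Phi)$ and set $\mathcal{V}_a(S)=\{n\}$, $\mathcal{V}_i(S)=\{\mathbf{k}_1,\dots,\mathbf{k}_j\}$ and $\mathcal{V}_p(S)=\{\fat{\alpha}_1,\dots,\fat{\alpha}_l\}$; since $\Phi$ is a $\mathbf{P}$-schema we have $\{\fat{\alpha}_1,\dots,\fat{\alpha}_l\}\subseteq\mathbf{P}$. The cardinality hypothesis $l+j+|\mathcal{V}_a(S)|\le|\mathbf{P}|$ gives $|\mathbf{P}\setminus\mathcal{V}_p(S)|\ge|\mathbf{P}|-l\ge j+|\mathcal{V}_a(S)|=j+1$, so I can choose pairwise distinct passive parameters $\fat{\gamma}_1,\dots,\fat{\gamma}_j,\fat{\beta}\in\mathbf{P}$, all disjoint from $\{\fat{\alpha}_1,\dots,\fat{\alpha}_l\}$, and set $\sigma=\{\mathbf{k}_1\leftarrow\fat{\gamma}_1,\dots,\mathbf{k}_j\leftarrow\fat{\gamma}_j\}$. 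With a fresh proof symbol $\chi$ I define $\mathbf{C}=(\chi,\pi_{\mathbf{C}},\nu_{\mathbf{C}})$, where $\pi_{\mathbf{C}}$ is the single link to $\mathbf{C}_1$ concluding $S\{n\leftarrow 0\}\sigma$ and $\nu_{\mathbf{C}}$ is the single link to $\mathbf{C}_1$ concluding $S\{n\leftarrow\fat{\beta}\}\sigma$. Both link sequents are active- and internal-parameter free, and $es(\mathbf{C})=S\{n\leftarrow\fat{\beta}\}\sigma$.

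Next I would check that $\Phi'=\langle\mathbf{C},\mathbf{C}_1,\dots,\mathbf{C}_\alpha\rangle$ is a $\mathbf{P}$-schema. The proof symbols remain distinct because $\chi$ is fresh, and $\mathcal{V}_p(\mathbf{C})=\{\fat{\alpha}_1,\dots,\fat{\alpha}_l,\fat{\gamma}_1,\dots,\fat{\gamma}_j,\fat{\beta}\}\subseteq\mathbf{P}$ by the choice above. For $(\mathbf{C},\mathbf{C}_1)$-linkability I note that neither replacing the active parameter nor applying $\sigma$ deletes an existing passive parameter, so $\mathcal{V}_p(es(\mathbf{C}_1))=\{\fat{\alpha}_1,\dots,\fat{\alpha}_l\}$ is contained in the passive parameters of each of the two link sequents; hence $\mathbf{C}\prec\mathbf{C}_1$. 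I then extend the given well-founded suborder $\prec^*$ by placing $\mathbf{C}$ strictly below $\mathbf{C}_1$. Since $\mathbf{C}$ is fresh nothing links to it, so no cycle is introduced and $\mathbf{C}$ is the least element of a well-founded suborder of $\prec$ on $\{\mathbf{C},\mathbf{C}_1,\dots,\mathbf{C}_\alpha\}$. As $es(\Phi')=es(\mathbf{C})$ satisfies $\mathcal{V}_a(es(\mathbf{C}))=\mathcal{V}_i(es(\mathbf{C}))=\emptyset$, the schema $\Phi'$ results in an \mvLKS-proof.

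The step I expect to be the main obstacle is the soundness of $\nu_{\mathbf{C}}$, i.e. justifying that instantiating the active parameter $n$ by the passive parameter $\fat{\beta}$ (rather than by a ground numeral) still yields a legitimate schema. The key observation is that passive parameters are eigenvariables: for every ground instantiation $\fat{\beta}\leftarrow\gamma$ with $\gamma\in\mathcal{G}_{\mathbb{N}}$ the link in $\nu_{\mathbf{C}}$ becomes a link of the form $(\mathbf{C}_1.1,\gamma,\dots)$, which unfolds---through the recursion internal to $\mathbf{C}_1$ and down the suborder $\prec^*$---to a link-free \mvLKE-proof, so $\Phi'$ evaluates to an infinite sequence of \mvLKS-proofs exactly as $\Phi^*$ does in Example~\ref{example.1}. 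The substitution $\sigma$ is admissible for the same reason, internal parameters merely transporting information through links and carrying no quantifier commitment, so replacing them by fresh passive eigenvariables preserves derivability. The remaining points---that $\pi_{\mathbf{C}}$ is inactive, that $\nu_{\mathbf{C}}$ has the stated end sequent, and that the count of fresh passive parameters is tight---are routine bookkeeping secured by the cardinality hypothesis.
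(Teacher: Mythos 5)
Your proposal is correct and takes essentially the same approach as the paper: the paper's entire proof is the remark ``Essentially the translation of Example~\ref{example.1}'', i.e.\ prepending a new least component whose base and step parts are links to $\mathbf{C}_1$, sending the active parameter to $0$ resp.\ a fresh passive parameter and the internal parameters to fresh passive parameters. Your write-up merely makes explicit the bookkeeping (cardinality of $\mathbf{P}$, linkability, extension of the well-founded suborder) that the paper leaves implicit.
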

\begin{proof}
Essentially the translation of Example~\ref{example.1}.
\end{proof}
While it might seem unnecessary to restrict ourselves to proofs with end sequents which only contain passive parameters, the restriction provides a simple definition of evaluation presented in the next section. We will refer to such proof schemata as {\em complete} if every sub $\mathbf{P}$-schema of $\Phi$ is also non-computational. 

In the following sections we show that the results and concepts of~\cite{ALeitsch2017} can be extended to our more general formalization.

\subsection{Evaluating $\mathbf{P}$-Schemata}
\label{sec:evalInter}
Like the proof schema of previous work, $\mathbf{P}$-schema represents infinite sequences of proofs. We extend the  soundness result of proof schema  to the  $\mathbf{P}$-schema case. However, the evaluation procedure is a bit more involved given the more complex schematic language of $\mathbf{P}$-schemata.

\begin{definition}[Evaluation of $\mathbf{P}$-schemata] Let $\Phi $ be a complete $\mathbf{P}$-schema $ \left\langle \mathbf{C}_1, \cdots,\right. $ $\left.  \mathbf{C}_\alpha \right\rangle$. We define the rewrite rules for links
\[ \hat{\psi_i}(0, \mathcal{I},\overline{r}) \to \pi \qquad \hat{\psi_i}(s(n), \mathcal{I},\overline{r}) \to \nu \]
where $\hat{\psi_i}$ is the pair of rewrite rules for $\mathbf{C}_i$. The rewrite system for  links is the union of these rules. Moreover, for a substitution $\sigma:\mathcal{N}_p \rightarrow \mathcal{G}_{\mathbb{N}}$ with domain $P$, we define $\Phi\sigma = \hat{\psi_1}\sigma $ as the normal form of $\Phi$ under these rewrite rules and $\mathcal{E}$.
\end{definition}

\begin{lemma} 
\label{lem:unrolling}
Let $\Phi$ be a complete $\mathbf{P}$-schema and $\sigma:\mathcal{N}_p \rightarrow \mathcal{G}_{\mathbb{N}}$ with domain $P$ a substitution.  The rewrite system for the links of $\Phi$ is strongly normalizing and confluent, s.t. $\Phi \sigma $  is an \LK-proof.
\end{lemma}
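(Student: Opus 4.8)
The statement asserts two things about the link-rewrite system associated with a complete $\mathbf{P}$-schema $\Phi$ together with a ground substitution $\sigma:\mathcal{N}_p \rightarrow \mathcal{G}_{\mathbb{N}}$: first, that the system is strongly normalizing and confluent, and second, that the resulting normal form $\Phi\sigma$ is an honest \LK-proof. I would treat these as two separate tasks, since strong normalization plus local confluence already yields a unique normal form by Newman's lemma, and only afterwards would I argue that this normal form is link-free and \LK-valid.

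\emph{Strong normalization.} The plan is to extract a well-founded measure that strictly decreases under every application of a link-rewrite rule. Each link carries an argument of the form $(\hat{\psi_i}, k, \mathcal{I}, \overline{r})$; after applying $\sigma$, the active-parameter position $k$ of every \emph{top-level} link becomes a ground numeric term in $\mathcal{G}_{\mathbb{N}}$, because the end sequent of $\Phi$ contains only passive parameters and $\sigma$ eliminates those. First I would show, by induction along the well-founded suborder $\prec^*$ guaranteed by the $\mathbf{P}$-schema definition, that unfolding any link either (i) decreases the $\prec^*$-rank of the proof symbol it points to, or (ii) keeps the symbol fixed but strictly decreases the value of its ground active argument — the rewrite rule $\hat{\psi_i}(s(n),\ldots)\to\nu$ replaces $n'$ by $n$ inside the recursive link, and $\hat{\psi_i}(0,\ldots)\to\pi$ is terminal because $\pi$ is link-free in its own recursion. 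The natural measure is therefore a lexicographic pair (the $\prec^*$-rank of the component, the value of its ground active argument), ordered over the well-founded suborder on components times $\mathbb{N}$. The \emph{completeness} hypothesis is exactly what I would lean on here: because every sub $\mathbf{P}$-schema is non-computational, links carrying non-ground active terms are forbidden, so case (ii) always sees a genuine numeral, and no link can substitute an active term that reintroduces a parameter and stalls the descent. This is the step I expect to be the main obstacle, because one must handle internal-parameter arguments $\overline{r}$ and passive-parameter side conditions carefully to be sure they neither grow the measure nor block the terminal base-case rule; the linkability condition $\mathcal{V}_p(es(\mathbf{D}))\subseteq\mathcal{V}_p(S)$ is what prevents an unfolded passive parameter from surviving ungrounded.

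\emph{Confluence.} Since the system is strongly normalizing, by Newman's lemma it suffices to check local confluence. The link-rewrite rules are left-linear and their left-hand sides $\hat{\psi_i}(0,\ldots)$ and $\hat{\psi_i}(s(n),\ldots)$ are non-overlapping (distinct proof symbols $\mathbf{C}_i.1$ are required to be distinct, and $0$ versus $s(n)$ cannot unify), so there are no critical pairs among them; interaction with the convergent equational theory $\mathcal{E}$ reduces to the assumed convergence of $\mathcal{E}$ together with the fact that $\mathcal{E}$-rewriting touches only terms and formula schemata inside fixed sequents and commutes with link unfolding. Hence local confluence, and therefore confluence, follows.

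\emph{The normal form is an \LK-proof.} Finally I would argue that $\Phi\sigma$ contains no links. After $\sigma$ the top-level active argument is a numeral, and by the strong-normalization argument every link eventually rewrites until its active argument hits $0$, invoking the link-free base case $\pi$; any internal links introduced along the way again carry ground arguments by condition (2) of the sub $\mathbf{P}$-schema definition ($\mathcal{V}_i(t)=\emptyset$ and the active terms stay ground), so the process cannot leave a residual link. Each component's $\pi$ and $\nu$ are \mvLKS-derivations whose only non-\LK features are the $\mathcal{E}$-inferences and the links; once the links are gone and $\sigma$ has grounded all parameters, the remaining $\mathcal{E}$-steps are valid inferences of \LKE\ over ground data, and — reading $\mathcal{E}$-equalities as the equational rewriting they denote — the whole tree is a genuine \LK-proof (more precisely \LKE, which collapses to \LK\ once the equational reasoning is discharged). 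Assembling the pieces: strong normalization and confluence give a unique normal form, and the grounding argument identifies that normal form as a link-free \LK-proof, which is the claim.
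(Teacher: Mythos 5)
Your proposal is correct in substance and ends where the paper ends, but it takes a more self-contained route. The paper's own proof is essentially a one-step appeal to rewriting theory: by the restriction on link occurrences, a complete $\mathbf{P}$-schema \emph{is} a system of primitive recursive definitions, and the link rules are the standard rules for such definitions, whose strong normalization is quoted from the literature (Ketema--Klop--van Oostrom); completeness then guarantees that the normal form is link-free, hence an \mvLKE-proof, and since after $\sigma$ all $\mathcal{E}$-inferences are trivial and parameter-free, the result may be read as an \LK-proof. You instead reconstruct the termination argument from first principles --- a lexicographic measure pairing a component's position in the well-founded suborder $\prec^*$ with the value of its ground active argument --- and you supply a confluence argument (left-linear, non-overlapping left-hand sides, hence orthogonality, plus Newman's lemma) that the paper never separates out from its citation. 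What your version buys is an explicit accounting of exactly where completeness and the well-founded ordering enter, and an actual proof of the confluence claim; what the paper's version buys is brevity and generality, since the cited result covers the whole class of primitive recursive definitions at once.

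Two details in your write-up need repair, though neither is fatal. First, in this formalism the base-case derivation $\pi$ of an $(n,\mathcal{I})$-component is \emph{not} link-free --- the paper explicitly allows base cases to link to other components (this is one of its advertised novelties), only the original Section~2.1 proof schemata forbid links in $\pi$. So the rule $\hat{\psi_i}(0,\mathcal{I},\overline{r}) \to \pi$ is not terminal in general; your measure survives because any link inside $\pi$ points to a \emph{different} component and thus falls under your case (i). Second, the linkability order is oriented so that a component is $\prec$ the components it links to ($\mathbf{C}_1$ is the \emph{least} element), so along a chain of inter-component unfoldings the $\prec^*$-position ascends rather than descends; since the component set is finite such chains are still bounded, but the first coordinate of your lexicographic measure should be, say, the number of components above the current one in $\prec^*$, not its rank from below.
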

\begin{proof}
By the restriction on occurrences of  links, a proof schema can be seen as a set of primitive
recursive definitions, and the rewrite rules for links are the standard rules for these definitions. It is well-known that such rewrite systems are strongly normalizing, see \cite{DBLP:journals/entcs/KetemaKO05}.
Finally, by the restriction to complete $\mathbf{P}$-schema links will not occur in the normal form and $\Phi \sigma$ is an \mvLKE-proof. Furthermore, since all $\mathcal{E}$-inferences in this proof are trivial and there are no parameters, we may consider it as an \LK-proof.
\end{proof}
The constraints of Lemma~\ref{lem:unrolling} can be relaxed by allowing computational sub $P$ schema. This  can be handled by structural induction on the $\mathbf{P}$-schema construction by showing that any complete $\mathbf{P}$-schema  could be unrolled into an \LK-proof and therefore is soundly constructed. Also, showing that a $\mathbf{P}$-schema containing computation sub $\mathbf{P}$-schemata, treating any link to a  sub $\mathbf{P}$-schema as an axiom, unrolls into an \LK-proof (modulo a theory extension) and is therefore soundly constructed. 

In light of these details the soundness of $\mathbf{P}$-schemata w.r.t. \mvLKE\ can be stated as follows:
\begin{theorem}[Soundness of $\mathbf{P}$-schemata]
\label{thm:sound}  Let $\Psi$ be a  $\mathbf{P}$-schema  and let $\sigma:\mathcal{N}_p \rightarrow \mathcal{G}_{\mathbb{N}}$ be a substitution. Then $\Psi \sigma $ is an \LK-proof over a theory $T$ of $es(\Psi)\sigma$, where $T$ is the set of end-sequents associated with the computational sub $\mathbf{P}$-schema $\Psi$. 
\end{theorem}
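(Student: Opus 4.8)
The plan is to derive this statement as a generalization of Lemma~\ref{lem:unrolling}, proceeding by structural induction on the computational sub $\mathbf{P}$-schemata contained in $\Psi$. The crucial first observation is that applying $\sigma$ eliminates every passive parameter: since $\sigma$ has domain $P$ and $\mathcal{V}_{p}(\mathbf{C}_i)\subseteq \mathbf{P}$ for each component, $\Psi\sigma$ contains no passive parameters, and the only parameters that could survive are active and internal ones, which are driven exclusively by the link-rewriting system of the evaluation definition. The well-founded suborder $\prec^*$ guarantees that the linkability relation contains no cycles among the components of $\Psi$, so that, combined with the strong normalization of primitive recursive link definitions established in Lemma~\ref{lem:unrolling}, the unrolling of $\Psi\sigma$ terminates.

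First I would dispatch the base case, where $\Psi$ is \emph{complete}, i.e.\ it contains no computational sub $\mathbf{P}$-schema. Here Lemma~\ref{lem:unrolling} applies directly: every link is rewritten away by the base/step rules $\hat{\psi_i}(0,\ldots)\to\pi$ and $\hat{\psi_i}(s(n),\ldots)\to\nu$ together with $\mathcal{E}$, the normal form contains no links, and all $\mathcal{E}$-inferences become trivial. Thus $\Psi\sigma$ is an \LK-proof of $es(\Psi)\sigma$ with $T=\emptyset$, as required.

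For the inductive step I would isolate a $\prec^*$-maximal computational sub $\mathbf{P}$-schema $\Psi'$ of $\Psi$. By the definition of \emph{computational}, every link from $\Psi\setminus\Psi'$ into $\Psi'$ substitutes the active parameter $n\in\mathcal{V}_{a}(es(C'))$ by a term $t\notin\mathcal{G}_{\mathbb{N}}$; because $t$ is not a ground numeral, the recursion on the active parameter can never bottom out in a base case, so the link cannot be rewritten to a concrete, link-free proof. I would therefore \emph{abstract} each such link, replacing it by a leaf carrying the instantiated end-sequent $es(\Psi')\sigma$; this is well defined precisely because condition~(2) of the sub $\mathbf{P}$-schema definition forces $\mathcal{V}_{i}(t)=\emptyset$ and keeps the active and passive parameters of $t$ disjoint from those governing the ambient recursion. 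Collecting these sequents yields the theory $T$. The schema obtained from $\Psi$ by replacing each maximal computational sub-schema with such axiom-leaves is then \emph{complete relative to} $T$, and the same rewriting argument as in the base case shows that it unrolls into an \LK-proof over $T$. Applying the induction hypothesis to each $\Psi'$ (which contains strictly fewer computational sub-schemata and whose own links still respect $\prec^*$) then shows that each sequent of $T$ is itself provable modulo its associated theory, completing the structural induction.

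I expect the main obstacle to be the bookkeeping around condition~(2) of the sub $\mathbf{P}$-schema definition in the presence of the active-parameter substitution $n\leftarrow t$. One must check that abstracting a computational link as an axiom does not silently merge two distinct active parameters or leak an internal parameter across the abstraction boundary --- exactly the situations that condition~(2) is designed to forbid --- so that the end-sequent placed in $T$ is genuinely parameter-consistent and the residual derivation over $T$ is a legitimate \LK-proof. A secondary subtlety is to confirm that the well-foundedness of $\prec^*$ is preserved under the abstraction step, guaranteeing that the outer induction on computational sub-schemata is itself well-founded and hence terminates.
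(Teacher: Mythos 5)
Your proposal is correct and takes essentially the same route as the paper: the paper justifies this theorem only by the sketch in the paragraph preceding it, namely a structural induction on the $\mathbf{P}$-schema construction in which complete $\mathbf{P}$-schemata are unrolled via Lemma~\ref{lem:unrolling} and every link into a computational sub $\mathbf{P}$-schema is treated as an axiom whose instantiated end-sequent is collected into the theory $T$. Your extra bookkeeping (why non-ground active-parameter substitutions block unrolling, the role of condition~(2), preservation of the well-foundedness of $\prec^*$) just makes explicit what the paper leaves implicit.
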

 An interesting side note is that the calculus presented in \cite{DCerna2017a} can easily integrate the formalization presented here and thus can provide a calculus for $\mathbf{P}$-schema. We will consider this in future work. 

Note that when measuring the size of  $\mathbf{P}$-schema, computational sub $\mathbf{P}$-schema of the $\mathbf{P}$-schema do not contribute to the measurement. This will be important for the definition of the {\em numeric} $\mathbf{P}$-schema introduced in Section \ref{sec:krei}.
\section{Local Induction and \mvLKE }
\label{LocalINC}
In~\cite{ALeitsch2017}, it was shown that proof schemata are equivalent to a particular fragment of  arithmetic, i.e.\ the so called {\em $k$-simple induction}, which limits the number of inductive eigenvariables\footnote{Inductive eigenvariables are eigenvariables occurring in the context of an induction inference rule.} to one. The induction rule is as follows

\begin{prooftree}
\AxiomC{$F(k),\Gamma \vdash \Delta, F(s(k))$}
\RightLabel{$\mathbf{IND}$}
\UnaryInfC{$F(0),\Gamma \vdash \Delta, F(t)$}
\end{prooftree}
where $t$ is a term of the numeric sort such that  $t$ either contains $k$ ($k$ is a free parameter in the sense of~\cite{ALeitsch2017}) or is ground. Adding the above rule to $\LKE$  resulted in the $\mathbf{LKIE}$-calculus. We will refer to this calculus as the $\mathit{simple}\ \mathbf{LKIE}$-calculus. The $\mvLKE$-calculus and $\mathbf{P}$-schema are related to a much more expressive induction rule. Essentially any term $t$ can replace the active parameter $n$ of the auxiliary sequent (including a term containing $n$) and the internal parameters can be instantiated with arbitrary terms. The instantiations must obey the restriction of at most one active parameter per schematic sequent. We refer to the calculus with the following induction rule as the \mvLKIE-calculus: 

\begin{small}
\begin{prooftree}
\AxiomC{$F(n,\mathbf{m}_1,\cdots,\mathbf{m}_{\alpha}),\Gamma \vdash \Delta, F(n',\mathbf{m}_1,\cdots,\mathbf{m}_{\alpha})$}
\RightLabel{$\mathit{mv}\mathbf{IND}$}
\UnaryInfC{$F(0,\mathbf{a}_1,\cdots,\mathbf{a}_{\alpha}),\Gamma \vdash \Delta, F(t,\mathbf{a}_1,\cdots,\mathbf{a}_{\alpha})$}
\end{prooftree}
\end{small}
where the $\mathbf{m}_i$ are internal parameters which can be replaced by any schematic term, that is active, passive, ground, or another internal parameter. 

We consider an \mvLKIE-derivation $\psi$ as an \mvLKIE-proof if the end-sequent of $\psi$ only contains passive parameters. We will first consider strict \mvLKIE-proofs which, like strict links, require preservation of the passive parameters, i.e. all passive parameters used in the proof must show up in the end sequent. 
\begin{example}
Here we present a strict \mvLKIE-proof of the $\lbrace \fat{\alpha}, \fat{\beta} \rbrace$-schema of Example~\ref{ex:two}. Notice how the links are replaced by the induction rules in a similar fashion to the $k$-induction conversion introduced in~\cite{ALeitsch2017}. 
\begin{tiny}
\begin{prooftree}
\AxiomC{$(\nu_1)$}
\AxiomC{$\begin{array}{c}\vdots\\\hat{a}(n, \hat{a}(\mathbf{m},\mathbf{k})) = \hat{a}(\hat{a}(n,\mathbf{m}),\mathbf{k})\vdash \\ \hat{a}(n', \hat{a}(\mathbf{m},\mathbf{k})) = \hat{a}(\hat{a}(n',\mathbf{m}),\mathbf{k})  \end{array}$}
\RightLabel{$\mathit{mv}\mathbf{IND}$}
\UnaryInfC{$\begin{array}{c}\hat{a}(0, \hat{a}(1,n)) = \hat{a}(\hat{a}(0,1),n)\vdash \\ \hat{a}(\fat{\alpha}, \hat{a}(1,n)) = \hat{a}(\hat{a}(\fat{\alpha},1),n)  \end{array}$}

\UnaryInfC{$\begin{array}{c}\vdots\\ \vdash \hat{a}(\fat{\alpha}, \hat{a}(1,n)) = \hat{a}(\hat{a}(\fat{\alpha},1),n) \end{array}$}
\RightLabel{$cut$}
\BinaryInfC{$\begin{array}{c}  \hat{a}(\fat{\alpha},n) =\hat{a}(n,\fat{\alpha}) \vdash\\ \hat{a}(\fat{\alpha}, \hat{a}(1,n)) = \hat{a}(n',\fat{\alpha}) \end{array}$}
\RightLabel{$\mathcal{E}$}
\UnaryInfC{$\begin{array}{c} \hat{a}(\fat{\alpha},n) =\hat{a}(n,\fat{\alpha}) \vdash\\ \hat{a}(\fat{\alpha}, s(\hat{a}(0,n))) = \hat{a}(n',\fat{\alpha}) \end{array}$}
\RightLabel{$\mathcal{E}$}
\UnaryInfC{$\begin{array}{c} \hat{a}(\fat{\alpha},n) =\hat{a}(n,\fat{\alpha}) \vdash \\ \hat{a}(\fat{\alpha}, n') = \hat{a}(n',\fat{\alpha}) \end{array}$}
\RightLabel{$\mathit{mv}\mathbf{IND}$}
\UnaryInfC{$\begin{array}{c} \hat{a}(\fat{\alpha},0) =\hat{a}(0,\fat{\alpha}) \vdash\\ \hat{a}(\fat{\alpha}, \fat{\beta} ) = \hat{a}(\fat{\beta},\fat{\alpha}) \end{array}$}
\end{prooftree}
\end{tiny}
where $\nu_1$ is
\begin{tiny}
\begin{prooftree}
\AxiomC{$\begin{array}{c}  \hat{a}(\fat{\alpha},n) =\hat{a}(n,\fat{\alpha}) \vdash \\  \hat{a}(\fat{\alpha},n) =\hat{a}(n,\fat{\alpha}) \end{array} $}
\AxiomC{$\begin{array}{c} \hat{a}(n,1) = \hat{a}(1,n) \vdash\\    \hat{a}(n',1) = \hat{a}(1,n')  \end{array}$}
\dottedLine
\RightLabel{$\mathit{mv}\mathbf{IND}$}
\UnaryInfC{$\begin{array}{c}  \hat{a}(0,1) = \hat{a}(1,0) \vdash \\   \hat{a}(\fat{\alpha},1) = \hat{a}(1,\fat{\alpha})  \end{array}$}
\UnaryInfC{$\vdots$}
\RightLabel{$cut$}
\BinaryInfC{$\vdots$}
\end{prooftree}
\end{tiny}
\end{example}

Note that the proof of equivalence between  $\mathbf{LKIE}$-proofs and proof schemata provided in~\cite{ALeitsch2017} does not directly use the  $k$-simple induction restriction. What is important for the argumentation is that $\mathbf{LKIE}$-proofs are structured in similar fashion as proof schemata.  Thus we can, for the most part, use the same arguments to prove the feasibility of translation for $\mathbf{P}$-schema and \textit{mv} induction. This argument is easier to make when we enforce the proofs to be strict, i.e. transforming them into complete $\mathbf{P}$-schema, however such construction can be easily generalized to complete $\mathbf{P}$-schema by considering  strict $\mathbf{P}$-schema as a base case for well-founded structural induction. 

\begin{lemma}
\label{lem:SI}
Let $\Psi$ be a strict\footnote{$\mathcal{V}_{p}(es(\Psi)) )\equiv P$} $\mathbf{P}$-schema with end-sequent $\mathcal{S}$. Then there exists a strict \mvLKIE-derivation of $\mathcal{S}$.
\end{lemma}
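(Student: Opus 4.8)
The plan is to proceed by well-founded induction along the suborder $\prec_s^*$ underlying the strict $\mathbf{P}$-schema, mirroring the conversion from proof schemata to $\mathbf{LKIE}$-proofs of~\cite{ALeitsch2017} but with the more liberal $\mathit{mv}\mathbf{IND}$ rule absorbing the internal parameters. Concretely, I would prove the stronger statement that for \emph{every} component $\mathbf{C}_i = (\psi_i,\pi_i,\nu_i)$ of $\Psi$ there is a strict \mvLKIE-derivation of $es(\mathbf{C}_i)$; the lemma then follows by taking $i=1$, since $\mathbf{C}_1$ is the least element and $es(\Psi)=es(\mathbf{C}_1)=\mathcal{S}$. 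Since the suborder $\prec_s^*$ records the cross-component link dependencies, a link from $\mathbf{C}_i$ to a distinct component always points to a $\prec_s^*$-greater one, so the derivation for $\mathbf{C}_i$ depends only on derivations for strictly greater components; finiteness of the suborder makes the reversed order well founded, so the induction is legitimate and terminates, with the $\prec_s^*$-maximal components (which carry only self-links) forming the base case.

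For the induction step at a component $\mathbf{C}_i$ I would perform two substitutions on the pair $(\pi_i,\nu_i)$. First, the cross-component links: every link in $\pi_i$ or $\nu_i$ to a strictly greater component $\mathbf{C}_j$, which instantiates the active parameter of $es(\mathbf{C}_j)$ by a term $t$ together with a passive/internal instantiation, is replaced by the strict \mvLKIE-derivation of $es(\mathbf{C}_j)$ supplied by the induction hypothesis, with the same substitution applied. Second, the self-link: the leaf of $\nu_i$ labelled $\psi_i$ with active argument $n$ is read as the induction hypothesis $F(n,\mathbf{m}_1,\dots,\mathbf{m}_\alpha)$, where $F$ abbreviates the principal formula of $es(\mathbf{C}_i)$ and the $\mathbf{m}_k$ are its internal parameters; since $es(\nu_i)=es(\psi_i)\{n\leftarrow n'\}$ (as in Example~\ref{ex.running1}), the so-modified $\nu_i$ derives precisely the premise $F(n,\mathbf{m}_1,\dots,\mathbf{m}_\alpha),\Gamma \vdash \Delta, F(n',\mathbf{m}_1,\dots,\mathbf{m}_\alpha)$ of $\mathit{mv}\mathbf{IND}$. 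Applying that rule yields $F(0,\mathbf{a}_1,\dots,\mathbf{a}_\alpha),\Gamma\vdash\Delta,F(t,\mathbf{a}_1,\dots,\mathbf{a}_\alpha)$, and a single cut against the recursively converted base case $\pi_i$, which derives $es(\mathbf{C}_i)\{n\leftarrow 0\}$, discharges the $F(0,\mathbf{a}_1,\dots,\mathbf{a}_\alpha)$ antecedent and produces a derivation of $es(\mathbf{C}_i)$.

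The verification that this construction stays within \mvLKIE\ and remains \emph{strict} is the delicate bookkeeping, and I expect it to be the main obstacle. Three invariants must be maintained simultaneously: (i) the at-most-one-active-parameter restriction, which is exactly why cross-component links cannot be deleted (cf.\ the discussion after Example~\ref{ex:two}) and why clause (2) of the sub $\mathbf{P}$-schema definition forbids the substituted term $t$ from carrying a foreign active parameter, an internal parameter, or a passive parameter of $es(\Psi)$; (ii) the eigenvariable conditions, since passive parameters play the role of eigenvariables and strict linkability $\mathcal{V}_{p}(es(\mathbf{C}_j))\subseteq\mathcal{V}_{p}(es(\mathbf{C}_i))$ guarantees that every passive parameter introduced by a spliced-in subderivation already occurs in the conclusion, so no capture or eigenvariable violation arises, and by the hypothesis $\mathcal{V}_{p}(es(\Psi))\equiv P$ every passive parameter ultimately appears in $\mathcal{S}$; and (iii) the internal-parameter instantiation performed by the cross-component links, which is precisely the freedom granted to the $\mathbf{m}_k$ in the $\mathit{mv}\mathbf{IND}$ rule. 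Once these invariants are checked to be preserved by each recursion step, the derivation of $es(\mathbf{C}_1)=\mathcal{S}$ is a strict \mvLKIE-derivation, as required. As stressed in the paragraph preceding the lemma, the argument reuses the structural correspondence of~\cite{ALeitsch2017}; the only novelty is that $\mathit{mv}\mathbf{IND}$ handles the internal parameters and the multi-parameter links, so the bulk of the routine verification can be imported from there.
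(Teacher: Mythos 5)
Your proposal is correct and takes essentially the same route as the paper: the paper's proof is a terse sketch deferring to Proposition~3.13 of~\cite{ALeitsch2017}, observing only that the one-active-parameter-per-component restriction makes that construction carry over and that it must additionally be applied to the base cases of components. Your well-founded induction along $\prec_s^*$, replacing cross-component links by already-converted derivations and turning each self-link into an $\mathit{mv}\mathbf{IND}$ inference cut against the converted base case, is precisely that construction written out in detail, including the base-case links the paper flags as the only new ingredient.
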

\begin{proof}
 We can start by considering the proof of Proposition 3.13 from~\cite{ALeitsch2017}. We know that proof schemata are  $\lbrace\fat{\alpha}\rbrace$-schemata and only contain $(n, \mathcal{I} )$-components. Thus, Proposition 3.13 from~\cite{ALeitsch2017} provides a base case for the translation of strict $\mathbf{P}$-schema to \mvLKIE-derivations. Now let us consider the case when we have more than one active parameter. This does not influence the construction outlined in~\cite{ALeitsch2017} because there is only one active parameter per component. The only difference is that the translation of~\cite{ALeitsch2017} must be applied to the base cases of the components as well. 
\end{proof}
Note that the translation defined in Lemma~\ref{lem:SI}, along with Lemma~\ref{lem:one}, almost provides equivalence between strict $\mathbf{P}$-schema and primitive recursive arithmetic (\PRA), though  it is not clear if the $\mathcal{E}$ rule provides a more expressive language than \PRA.  However, Lemma~\ref{lem:one} clearly shows that $\PRA \subseteq \ \mathrm{strict}\ \mathbf{P}$-schema. 
\begin{lemma}
\label{lem:one}
Let $\Pi$ be a strict \mvLKIE-derivation of $\mathcal{S}$ containing $\alpha$ induction inferences of the form   
\begin{small}
\begin{prooftree}
\AxiomC{$F_{\beta}(n,\mathbf{m}_1,\cdots,\mathbf{m}_{\gamma}),\Gamma_{\beta} \vdash \Delta_{\beta}, F_{\beta}(n',\mathbf{m}_1,\cdots,\mathbf{m}_{\gamma})$}
\RightLabel{$\mathit{mv}\mathbf{IND}$}
\UnaryInfC{$F_{\beta}(0,\mathbf{a}_1,\cdots,\mathbf{a}_{\gamma}),\Gamma_{\beta} \vdash \Delta_{\beta}, F_{\beta}(t,\mathbf{a}_1,\cdots,\mathbf{a}_{\gamma})$}
\end{prooftree}
\end{small}
where $1 \leq \beta \leq \alpha$, and if $\eta < \beta$ then the induction inference 
with conclusion 
$$F_{\beta}(0,\mathbf{a}_1,\cdots,\mathbf{a}_{\gamma}),\Gamma_{\beta} \vdash \Delta_{\beta}, F_{\beta}(t,\mathbf{a}_1,\cdots,\mathbf{a}_{\gamma})$$
is above the induction inference with conclusion 
$$F_{\eta}(0,\mathbf{a}_1',\cdots,\mathbf{a}_{\gamma*}'),\Gamma_{\eta} \vdash \Delta_{\eta}, F_{\eta}(t,\mathbf{a}_1',\cdots,\mathbf{a}_{\gamma*}')$$ in $\Pi$.  Then there exists a strict $\mathbf{P}$-schema with end-sequent $\mathcal{S}$.
\end{lemma}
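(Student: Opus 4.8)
The plan is to reverse the translation carried out in Lemma~\ref{lem:SI}, reading off a strict $\mathbf{P}$-schema from the structure of the given \mvLKIE-derivation $\Pi$. The key observation is that the linear ordering imposed on the $\alpha$ induction inferences — namely that the induction with conclusion indexed by $\beta$ sits above that indexed by $\eta$ whenever $\eta<\beta$ — is precisely what we need to synthesize a well-founded linkability suborder $\prec^*$ on the components we build. So first I would proceed by induction on $\alpha$, the number of \mvIND\ inferences in $\Pi$.

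For the base case $\alpha = 1$, the derivation $\Pi$ contains a single induction inference and is otherwise an \mvLKE-derivation; this is exactly the situation handled by the equivalence of Proposition~3.13 in~\cite{ALeitsch2017} (read in the converse direction), which yields a single $(n,\mathcal{I})$-component whose step-case proof $\nu$ encodes the premise $F(n,\overline{\mathbf{m}}),\Gamma \vdash \Delta, F(n',\overline{\mathbf{m}})$ as an \mvLKS-derivation with a link to itself, and whose base-case proof $\pi$ encodes the instantiation at $0$. The term $t$ instantiating the conclusion and the internal-parameter instantiations $\mathbf{a}_i$ are absorbed into the arguments of the link, which is legitimate since links allow any type of parameter in their arguments. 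Strictness of $\Pi$ guarantees $\mathcal{V}_p(es(\Psi)) = P$, so the resulting single-component schema is strict.

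For the induction step, I would locate the \emph{topmost} induction inference in $\Pi$ — by the ordering hypothesis this is the one with the largest index, call it $\beta = \alpha$ — and excise the subderivation $\Pi_\alpha$ rooted at its conclusion. Replacing $\Pi_\alpha$ by a link to a fresh proof symbol $\psi_\alpha$ turns the remainder into a strict \mvLKIE-derivation $\Pi'$ with $\alpha-1$ induction inferences still satisfying the ordering hypothesis, to which I apply the induction hypothesis, obtaining a strict $\mathbf{P}$-schema $\Psi'$. Separately, $\Pi_\alpha$ with its single outermost induction is converted, as in the base case, into one further $(n,\mathcal{I})$-component $\mathbf{C}_\alpha$. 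The schema $\Psi = \langle \Psi', \mathbf{C}_\alpha\rangle$ is then formed by extending the component list, and the linkability ordering $\prec^*$ is extended so that $\mathbf{C}_\alpha$ links downward into the components of $\Psi'$ exactly where $\Pi'$ links to $\psi_\alpha$; the ``above'' condition on the induction inferences ensures these links respect a well-founded suborder and introduce no mutual recursion.

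The main obstacle I expect is the bookkeeping of passive parameters under the strict discipline: when a component $\mathbf{C}_\alpha$ links to a component of $\Psi'$, the linkability requirement $\mathcal{V}_p(es(\mathbf{D})) \subseteq \mathcal{V}_p(es(\mathbf{C}_\alpha))$ must be verified, and one has to check that the eigenvariable passive parameters introduced by each \mvIND\ inference (which, recall, function as eigenvariables) are correctly threaded into the end sequents so that strictness $\mathcal{V}_p(es(\Psi)) = P$ is preserved throughout. A second, subtler point is confirming that only one active parameter appears per component — this is automatic because each component we build carries exactly one \mvIND\ inference, and hence one induction eigenvariable $n$, but the instantiation term $t$ and the internal instantiations $\mathbf{a}_i$ must be verified not to smuggle in a second active parameter, which is guaranteed by the restriction of at most one active parameter per schematic sequent stated for the \mvLKIE-calculus.
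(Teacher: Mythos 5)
Your proposal is correct and is essentially the paper's own argument: the paper also builds one component per $\mathit{mv}\mathbf{IND}$ inference, taking the trivial derivation of $F_\beta(0,\bar{\mathbf{m}}),\Gamma_\beta \vdash \Delta_\beta, F_\beta(0,\bar{\mathbf{m}})$ as base case and, as step case, a cut between a self-link $\psi_\beta(n,\bar{\mathbf{m}})$ and the derivation sitting above that induction inference with the deeper inductions replaced by links, with the instantiations $t,\bar{\mathbf{a}}$ pushed into the link arguments of the component below, exactly as you describe. The only difference is bookkeeping: the paper defines a single link-introducing transformation $T$ and constructs all components by one top-down induction over the linearly ordered induction inferences, rather than your peel-off-the-topmost recursion on $\alpha$, and these amount to the same induction.
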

\begin{proof}
Let $T$ be the transformation taking an \mvLKIE-derivation $\varphi$ to an \mvLKS-derivation by 
replacing the induction inferences $F_{\eta}(0,\mathbf{a}_1,$ $\cdots,$ $\mathbf{a}_{\gamma}),$ $
\Gamma_{\eta}$ $\vdash$ $\Delta_{\eta},$ $ F_{\eta}(t,\mathbf{a}_1,$ $\cdots,$ $\mathbf{a}_{\gamma}')$, $\eta < \beta$,  with a proof link $\psi_\eta(t,\mathbf{a}_1',\cdots,
\mathbf{a}_{\gamma'}')$.  If the transformation reaches the induction inference $\beta$ it 
replaces the $F_{\beta}(0,\mathbf{a}_1,$ $\cdots,$ $\mathbf{a}_{\gamma}),$ $\Gamma_{\beta}$ $
\vdash$ $\Delta_{\beta},$ $F_{\beta}(t,$ $\mathbf{a}_1,$ $\cdots,$ $\mathbf{a}_{\gamma})$ with a 
proof link $\psi_\beta(t,\mathbf{m}_1,\cdots,$ $\mathbf{m}_{\gamma})$ and sequent of the 
proof link $F_{\beta}(0,\mathbf{m}_1,$ $\cdots,$ $\mathbf{m}_{\gamma}),$ $\Gamma_{\beta}$ $\vdash$ 
$\Delta_{\beta}, F_{\beta}(n,\mathbf{m}_1,$ $\cdots,$ $\mathbf{m}_{\gamma}).$ The instantiation is 
placed in the construction of the component for the predecessor of $\beta$. Such a transformation obviously constructs a  strict $\mathbf{P}$-schema from a strict \mvLKIE-derivation. 

We will inductively construct a  strict $\mathbf{P}$-schema $ \left\langle \mathbf{C}_1, \cdots , \right. $ $\left. \mathbf{C}_\alpha\right\rangle$  where $\mathbf{C}_\beta = (\psi_\beta, \pi,\nu)$ has the end sequent $F_{\beta}(0,\mathbf{m}_1,\cdots$ $,\mathbf{m}_{\gamma}),\Gamma_{\beta} $ $\vdash \Delta_{\beta}, F_{\beta}(n,\mathbf{m}_1,\cdots,\mathbf{m}_{\gamma})$ for some active parameter $n$. Assume that we have
already constructed such proofs for $\mathbf{C}_{\beta+1}, \cdots , \mathbf{C}_\alpha$ and consider the induction inference with the following main sequent $F_{\beta}(0,\mathbf{a}_1,\cdots,\mathbf{a}_{\gamma}),\Gamma_{\beta} $ $ \vdash \Delta_{\beta}, F_{\beta}(t,\mathbf{a}_1,$ $\cdots,\mathbf{a}_{\gamma})$. Let $\xi$ be the derivation above the induction. We set $\pi$ to  $F_{\beta}(0,\mathbf{m}_1,$ $\cdots,$ $\mathbf{m}_{\gamma}),$ $\Gamma_{\beta}$ $\vdash$ $\Delta_{\beta},$ $ F_{\beta}(0,$ $\mathbf{m}_1,$ $\cdots,$ $\mathbf{m}_{\gamma})$  which by
definition fulfills the requirements of links. Furthermore, let $\nu$ be the proof
\begin{small}
\begin{prooftree}
\AxiomC{$\begin{array}{c}\psi_{\beta}(n,\bar{\mathbf{m}}_{\gamma})\\ S(\psi_{\beta}(n,\bar{\mathbf{m}}_{\gamma}))\end{array}$}
\AxiomC{$\begin{array}{c}T(\xi)\\S(T(\xi))\end{array}$}
\RightLabel{$\mathit{mv}\mathbf{IND}$}
\BinaryInfC{$F_{\beta}(0,\bar{\mathbf{m}}_{\gamma}),\Gamma_{\beta},\Gamma_{\beta} \vdash \Delta_{\beta}, \Delta_{\beta}, F_{\beta}(n',\bar{\mathbf{m}}_{\gamma})$}
\RightLabel{$c*$}
\UnaryInfC{$F_{\beta}(0,\bar{\mathbf{m}}_{\gamma}),\Gamma_{\beta} \vdash  \Delta_{\beta}, F_{\beta}(n',\bar{\mathbf{m}}_{\gamma})$}
\end{prooftree}
\end{small}
where $S(\psi_{\beta}(n,\bar{\mathbf{m}}_{\gamma})) \equiv F_{\beta}(0,\bar{\mathbf{m}}_{\gamma}),\Gamma_{\beta} \vdash \Delta_{\beta}, F_{\beta}(n,\bar{\mathbf{m}}_{\gamma})$, $S(T(\xi$ $)) \equiv F_{\beta}(n,\bar{\mathbf{m}}_{\gamma}),\Gamma_{\beta} \vdash \Delta_{\beta}, F_{\beta}(n',\bar{\mathbf{m}}_{\gamma})$, which also clearly satisfies the requirement on  links. Summarizing, $\mathbf{C}_{\beta}$ is a component with end-sequent $F_{\beta}(0,\bar{\mathbf{m}}_{\gamma}),\Gamma_{\beta} \vdash \Delta_{\beta}, F_{\beta}(n,\bar{\mathbf{m}}_{\gamma})$. Linkability and the partial ordering come for free from the construction of strict \mvLKIE-derivations. 
\end{proof}

Notice that  Lemma~\ref{lem:one}  does not put a restriction on the number of passive parameters in the end  sequent, but limits the partial ordering of components to a total linear ordering. A simple corollary of  Lemma~\ref{lem:one} removes the restriction on the ordering of components. Notice that proving the corollary requires the same induction argument over a more complex order structure (the linkability ordering). Essentially, we would have to join chains of components together using cuts.

\begin{corollary}
\label{thm:one}
Let $\Pi$ be a strict \mvLKIE-derivation of $\mathcal{S}$. Then there exists a strict $\mathbf{P}$-schema with end-sequent $\mathcal{S}$.
\end{corollary}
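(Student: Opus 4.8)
The plan is to reduce Corollary~\ref{thm:one} to Lemma~\ref{lem:one} by showing that an arbitrary strict \mvLKIE-derivation $\Pi$ can always be rearranged so that its induction inferences satisfy the linear-ordering hypothesis of the lemma. Lemma~\ref{lem:one} already gives the full translation into a strict $\mathbf{P}$-schema, but only under the assumption that the $\alpha$ induction inferences come in a chain where the inference indexed $\beta$ sits above the one indexed $\eta$ whenever $\eta<\beta$. The corollary drops this, so the only gap is the \emph{ordering} of the inductions; as the remark preceding the statement indicates, the induction argument is the same but carried out over the more general linkability ordering $\prec$ rather than a total linear order.

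First I would define, for a strict \mvLKIE-derivation $\Pi$, a dependency relation on its induction inferences: say $I_\eta$ \emph{depends on} $I_\beta$ if $I_\beta$ occurs in the subderivation above the premise of $I_\eta$. Because $\Pi$ is a finite proof tree, this relation is a finite partial order; its reflexive-transitive closure is exactly the ancestor relation among induction nodes. I would then observe that this partial order is precisely what the linkability ordering $\prec$ of Definition~\ref{def:ordering} will become once each induction is replaced by a component via the transformation $T$ of Lemma~\ref{lem:one}: an induction nested above another induces a link from the outer component to the inner one, and strictness guarantees $\mathcal{V}_{p}(es(\mathbf{D}))\subseteq\mathcal{V}_{p}(es(\mathbf{C}))$ holds automatically, so every such pair is strictly linkable.

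Next I would run the same inductive construction as in the proof of Lemma~\ref{lem:one}, but performing the induction over the partial order of induction inferences rather than a linear chain. Concretely, I would process the induction inferences from the topmost (those with no induction above them, the $\prec$-least elements) downward: for each induction inference with main sequent $F_{\beta}(0,\bar{\mathbf{m}}_{\gamma}),\Gamma_{\beta}\vdash\Delta_{\beta},F_{\beta}(t,\bar{\mathbf{m}}_{\gamma})$, I replace each induction inference lying strictly above it by a proof link to the component already constructed for that inference, set $\pi$ to the reflexivity-style base sequent exactly as in Lemma~\ref{lem:one}, and assemble $\nu$ from the link $\psi_\beta(n,\bar{\mathbf{m}}_{\gamma})$ and the transformed upper derivation $T(\xi)$ via $\mathit{mv}\mathbf{IND}$ followed by contractions. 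Since the partial order is well founded (it is finite), this recursion terminates and yields components $\mathbf{C}_1,\dots,\mathbf{C}_\alpha$ whose induced ordering is a well founded suborder $\prec^*_s\subset\prec_s$, as required by the Definition of $\mathbf{P}$-schema. The least element $\mathbf{C}_1$ is the component coming from the bottommost induction, whose end sequent is $\mathcal{S}$.

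The main obstacle I anticipate is handling inductions that are \emph{incomparable} in the dependency order, that is, inductions sitting in sibling subtrees joined below by a binary rule such as $cut$. In the linear case of Lemma~\ref{lem:one} this never arises, but here two components may be produced whose end sequents must later be combined by cuts at the point where their host subderivations meet. As the remark before the corollary states, ``we would have to join chains of components together using cuts'': I would verify that the sequent appearing at such a join is exactly the conclusion obtainable by cutting the relevant component end sequents, so that the residual (induction-free) skeleton of $\Pi$, with links substituted for all induction inferences, remains a legitimate \mvLKS-derivation. Checking that linkability is preserved across these joins---in particular that no second active parameter is introduced and that the passive-parameter containment condition continues to hold at every link---is the delicate bookkeeping step, but it follows from strictness together with the one-active-parameter-per-sequent invariant maintained throughout the construction.
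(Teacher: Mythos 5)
Your proposal matches the paper's own treatment: the paper gives no separate proof of this corollary, only the preceding remark that it follows by ``the same induction argument over a more complex order structure (the linkability ordering)'' with chains of components ``joined together using cuts,'' and your construction---inducting over the dependency partial order of induction inferences, reusing the transformation $T$ and component construction of Lemma~\ref{lem:one}, and handling incomparable inductions at binary rules via the residual cut skeleton---is precisely a fleshed-out version of that sketch. One minor slip: the topmost induction inferences are the $\prec$-\emph{maximal} components (they link to nothing), not the $\prec$-least ones; the least element is the bottommost component $\mathbf{C}_1$ with end sequent $\mathcal{S}$, exactly as you state at the end, and your actual processing order (topmost first) is the correct one.
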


Concerning strict \mvLKIE-derivations, notice that the need for passive, internal, and active parameters is no longer there. The three parameters aided the formalization of $\mathbf{P}$-schema by removing mutual recursion and parameter instantiation, which are difficult to handle. Essentially, a reasonable class of  strict $\mathbf{P}$-schema could not be constructed without the three types of parameters. But for strict \mvLKIE-derivations, the construction is obvious and enforced by the proof structure, thus, we can replace internal and active parameters by the corresponding constants and passive parameters. The resulting rule is essentially the induction rule of arithmetic. However, along with the $\mathcal{E}$ rule, the language is at least a conservative extension of \PRA. We show that for a particular choice of equational theory and using the standard equational axioms, the  $\mathcal{E}$ rule is admissible and thus strict \mvLKIE-derivations are at least as expressive as {\PRA} and by transitivity so is the strict $\mathbf{P}$-schema formulation. 

Furthermore, by dropping the strictness requirement, that is allowing computational sub $\mathbf{P}$-schema, we can show that the $\mathbf{P}$-schema formulation is at least as expressive as \PA. As we mention above, this can be done by using the results of this section as a base case and performing a structural induction over the construction of a $\mathbf{P}$-schema.

\section{ $\mathbf{P}$-schema, \PRA , and \PA}
\label{PAandNS}
We will consider the $\mathbf{P}$-schema formulation over the following equational theory
\[\mathcal{E}_{\rm PA} = \left\lbrace \begin{array}{c} \widehat{a}(s(n),\beta) =s(\widehat{a}(n,\beta)) ; \ \widehat{a}(0,\beta) = \beta  \\
\widehat{m}(s(n),\beta) =\widehat{a}(\widehat{m}(n,\beta),\beta) ; \ \widehat{m}(0,\beta) = 0
\end{array} \right\rbrace \]
using the axioms of the formalization of Peano arithmetic and equational theory found in~\cite{GTakeuti1975}. We will refer to these axioms as \AxPa. We only consider a single two place propositional symbol which we will refer to as equality. We will refer to the calculus defined in~\cite{GTakeuti1975} for {\PA} as the \PA-calculus.

\begin{lemma}
\label{lem:Efree}
Let $\Pi$ be a strict \mvLKIE-proof using \AxPa\ and \EPa. Then there exists a strict \mvLKIE-proof\ \ $\Pi'$ without the $\mathcal{E}$ inference rule ($\Pi'$ is $\mathcal{E}$-free). 
\end{lemma}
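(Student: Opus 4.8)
The plan is to show that the $\mathcal{E}$ inference rule is \emph{admissible} for the specific equational theory \EPa, given that we work with \AxPa\ and the \LKS$_e$ equational axiom schemes. The key observation is that every application of the $\mathcal{E}$ rule rewrites a term or formula schema $t$ to $t'$ with $\EPa \models t = t'$, and for the defined symbols $\widehat{a}$ and $\widehat{m}$ the defining equations are exactly the primitive recursive clauses for addition and multiplication. So an $\mathcal{E}$-step either (i) rewrites using a base-case clause (e.g. $\widehat{a}(0,\beta) = \beta$) or (ii) rewrites using a step-case clause (e.g. $\widehat{a}(s(n),\beta) = s(\widehat{a}(n,\beta))$). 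The strategy is to simulate each such rewrite by a short derivation built from the equational axiom schemes together with the congruence axioms already available in \LKS$_e$, thereby eliminating the need for the $\mathcal{E}$ rule entirely.

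First I would set up a local replacement procedure. Since $\EPa \models t = t'$ means $t = t'$ is a consequence of finitely many instances of the four defining equations, I would argue that the equation $t = t'$ is itself derivable as a sequent $\vdash t = t'$ (or, for the predicate case, the biconditional is derivable) from \AxPa\ using only the congruence/reflexivity axiom schemes of \LKS$_e$ and the induction-free equational reasoning. Then, given a proof ending in $S(t)$ followed by an $\mathcal{E}$-inference to $S(t')$, I would replace that inference by: deriving $\vdash t = t'$ separately, and then using the congruence axiom scheme $s_1 = t_1, \ldots, s_k = t_k, R(s_1,\ldots,s_k,\overline{r}) \vdash R(t_1,\ldots,t_k,\overline{r})$ (together with a cut on $t = t'$) to transport the occurrence of $t$ in $S$ to $t'$. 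This is purely local: it touches only the inference being removed and leaves the surrounding proof structure, including the $\mathit{mv}\mathbf{IND}$ inferences and links, intact.

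The procedure should then be iterated. I would proceed by induction on the number of $\mathcal{E}$-inferences in $\Pi$, removing them one at a time (say, topmost-first) and invoking the local replacement above. Since the replacement introduces no new $\mathcal{E}$-inferences — only instances of the \LKS$_e$ axiom schemes and cuts — the count strictly decreases, and the process terminates in an $\mathcal{E}$-free proof $\Pi'$. I would also check that strictness is preserved: the introduced subderivations of $\vdash t = t'$ use only terms already appearing in the original sequents, so no new passive parameters are created and $\mathcal{V}_p(es(\Pi')) = \mathcal{V}_p(es(\Pi))$, keeping $\Pi'$ a strict \mvLKIE-proof.

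The main obstacle I expect is the step-case rewrites, where the defining clause (e.g. $\widehat{a}(s(n),\beta) = s(\widehat{a}(n,\beta))$) relates terms of different \emph{syntactic depth} and involves the successor. Establishing $\vdash t = t'$ for these without the $\mathcal{E}$ rule requires that the recursion equations themselves be available as derivable equational sequents from \AxPa; if \AxPa\ is formulated with the Peano recursion equations as axioms (as is standard in \cite{GTakeuti1975}), this is immediate, but one must confirm that each $\mathcal{E}$-rewrite corresponds to an \emph{instance} of such an axiom rather than to a composite equation requiring induction to justify. The delicate point is therefore to verify that, for \EPa\ specifically, $\EPa$-equality between two terms occurring in a single $\mathcal{E}$-step always reduces to a finite chain of axiom-instance applications of the defining equations plus congruence — i.e. that no genuine inductive reasoning is hidden inside a single $\mathcal{E}$-inference — so that the replacement remains induction-free and purely propositional/equational.
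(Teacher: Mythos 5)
Your proposal is correct and takes essentially the same route as the paper's own (much terser) proof: since the rewrite rules of \EPa\ are precisely the \AxPa\ axioms for addition and multiplication, each $\mathcal{E}$-inference can be simulated by instances of those axioms together with the equational axiom schemes and atomic cuts. Your extra diligence --- the induction on the number of $\mathcal{E}$-inferences, the strictness check, and the verification that a single $\mathcal{E}$-step hides no induction (which holds because $\mathcal{E}$-equality is convertibility in a convergent rewrite system, hence a finite chain of rule instances closed under congruence) --- merely makes explicit what the paper leaves implicit.
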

\begin{proof}
The rewrite rules of \EPa\ are precisely the axioms of \AxPa\ for addition and multiplication. Thus, from those axioms and the equational theory found in~\cite{GTakeuti1975} anything provable by the $\mathcal{E}$ inference rule can be proven using the above mentioned axioms and atomic cuts. 
\end{proof}
Now that we have $\mathcal{E}$-free strict \mvLKIE-proofs we can consider translation to the \PA-calculus without quantification. As the end sequent of a strict \mvLKIE-proof only has passive variables. We can push the passive variables up the proof tree and replace each active variable by a fresh passive variable. Thus, the resulting proof only contains passive variables and constants and is a proof in the \PRA-calculus, being that we have so far avoided quantification. Of course, a back translation can be performed, however doing so would not result in the same proof as the one we started with. 
\begin{theorem}
\label{thm:PRAEQ}
There exists a $\mathcal{E}$-free strict \mvLKIE-proof of a sequent $S$ iff there exists a  \PRA-calculus proof of $S$. 
\end{theorem}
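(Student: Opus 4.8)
The plan is to prove the biconditional \ref{thm:PRAEQ} by establishing each direction as an essentially structural translation between the two calculi, leveraging the machinery already developed. For the forward direction (from $\mathcal{E}$-free strict \mvLKIE-proofs to \PRA-calculus proofs), I would follow exactly the informal argument sketched in the paragraph preceding the statement. Given an $\mathcal{E}$-free strict \mvLKIE-proof $\Pi$ of $S$, I first observe that by the strictness hypothesis $es(\Pi) = S$ contains only passive parameters. The key move is to eliminate the active and internal parameters: I would push the passive parameters upward through the proof tree and replace each active parameter and each internal parameter by a fresh passive parameter (equivalently, by a free variable of the numeric sort in the \PRA-calculus). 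Since $\Pi$ is $\mathcal{E}$-free by assumption, the only nonstandard inference is $\mathit{mv}\mathbf{IND}$, and after this substitution each $\mathit{mv}\mathbf{IND}$ inference collapses to an ordinary induction inference over a single eigenvariable, which is precisely the induction rule available in the \PRA-calculus. I would then verify that no quantifier inferences are introduced, so the resulting object lies in the quantifier-free \PRA-calculus as required.

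For the reverse direction (from \PRA-calculus proofs to $\mathcal{E}$-free strict \mvLKIE-proofs), I would perform the inverse translation. Given a \PRA-calculus proof of $S$, each induction inference uses a single eigenvariable; I would designate this eigenvariable as the active parameter $n$ of an $\mathit{mv}\mathbf{IND}$ inference, take the internal parameter list to be empty (or the degenerate arguments forced by the \PRA induction), and mark the remaining free numeric variables as passive parameters so that strictness is maintained. The main point to check is that the \PRA induction rule is a special case of $\mathit{mv}\mathbf{IND}$, which is immediate from the shape of the $\mathit{mv}\mathbf{IND}$ rule with no internal parameters and $t = s(k)$; since the \PRA-calculus has no $\mathcal{E}$ inferences to begin with, the produced \mvLKIE-proof is automatically $\mathcal{E}$-free. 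I would note that, as remarked in the text, this back translation need not recover the proof one started with, but that is irrelevant for establishing mere existence of a proof.

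The main obstacle I expect is not in either translation direction per se, but in the careful bookkeeping of parameter bindings so that the syntactic side conditions of both calculi are respected. Concretely, when replacing active and internal parameters by fresh passive parameters in the forward direction, I must ensure the eigenvariable condition of the \PRA induction rule is met, i.e.\ that the chosen fresh parameter does not already occur in the relevant context $\Gamma_\beta \vdash \Delta_\beta$; the freshness choice handles this, but it requires the substitution to be applied globally and consistently through nested inductions. The \emph{nested} case is the delicate part: an inner $\mathit{mv}\mathbf{IND}$ may pass a nontrivial term $t$ (rather than $s(n)$) to its conclusion, and I must confirm that after parameter renaming this instantiation remains a legitimate \PRA induction, which is guaranteed because \PRA freely allows substituting arbitrary numeric terms for the free variable of the conclusion of an induction.

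I would close by invoking Lemma~\ref{lem:Efree} only implicitly: since the theorem already assumes the \mvLKIE-proof is $\mathcal{E}$-free, no $\mathcal{E}$-elimination is needed here, and the two directions together yield the stated equivalence. The upshot is that, modulo the parameter-renaming bookkeeping, the equivalence is a structural correspondence between the $\mathit{mv}\mathbf{IND}$ rule restricted to the empty internal-parameter case and the ordinary quantifier-free induction rule of the \PRA-calculus.
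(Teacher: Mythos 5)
Your proposal is correct and takes essentially the same route as the paper: both directions are the structural translation obtained by pushing passive parameters up to the leaves and replacing active (and internal) parameters by fresh passive ones, so that each $\mathit{mv}\mathbf{IND}$ inference becomes an ordinary quantifier-free induction of the \PRA-calculus, with the converse given by reversing this method (\PRA\ induction being a special case of $\mathit{mv}\mathbf{IND}$). Your write-up is simply more explicit about the eigenvariable and nested-induction bookkeeping that the paper's terse proof leaves implicit.
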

\begin{proof}
As a consequence of pushing the passive parameters up the proof to the leaves we convert \mvLKIE
~induction rules to standard induction rules, and thus \PRA-calculus proofs. Furthermore a back translation is possible by reversing the method.
\end{proof}
\begin{example}
The following proof of commutativity is the result of applying the translation from $\mathcal{E}$-free strict \mvLKIE-proofs to \PA-calculus proofs to the example first illustrated in Example~\ref{ex:two}. 
\begin{tiny}
\begin{prooftree}
\AxiomC{$(\nu_1)$}
\AxiomC{$\begin{array}{c}\vdots\\\hat{a}(\fat{\nu}, \hat{a}(1,\fat{\gamma})) = \hat{a}(\hat{a}(\fat{\nu},1),\fat{\gamma})\vdash \\ \hat{a}(s(\fat{\nu}), \hat{a}(1,\fat{\gamma})) = \hat{a}(\hat{a}(s(\fat{\nu}),1),\fat{\gamma})  \end{array}$}
\RightLabel{$\mathbf{IND}$}
\UnaryInfC{$\begin{array}{c}\hat{a}(0, \hat{a}(1,\fat{\gamma})) = \hat{a}(\hat{a}(0,1),\fat{\gamma})\vdash \\ \hat{a}(\fat{\alpha}, \hat{a}(1,\fat{\gamma})) = \hat{a}(\hat{a}(\fat{\alpha},1),\fat{\gamma})  \end{array}$}

\UnaryInfC{$\begin{array}{c}\vdots\\ \vdash \hat{a}(\fat{\alpha}, \hat{a}(1,\fat{\gamma})) = \hat{a}(\hat{a}(\fat{\alpha},1),\fat{\gamma}) \end{array}$}
\RightLabel{$cut$}
\BinaryInfC{$\begin{array}{c}  \hat{a}(\fat{\alpha},\fat{\gamma}) =\hat{a}(\fat{\gamma},\fat{\alpha}) \vdash\\ \hat{a}(\fat{\alpha}, \hat{a}(1,\fat{\gamma})) = \hat{a}(s(\fat{\gamma}),\fat{\alpha}) \\ (1)\end{array}$}

\end{prooftree}
\end{tiny}
where $\nu_1$ is
\begin{tiny}
\begin{prooftree}
\AxiomC{$\begin{array}{c}  \hat{a}(\fat{\alpha},\fat{\gamma}) =\hat{a}(\fat{\gamma},\fat{\alpha}) \vdash \\  \hat{a}(\fat{\alpha},\fat{\gamma}) =\hat{a}(\fat{\gamma},\fat{\alpha}) \end{array} $}
\AxiomC{$\begin{array}{c} \hat{a}(\fat{\mu},1) = \hat{a}(1,\fat{\mu}) \vdash\\    \hat{a}(s(\fat{\mu}),1) = \hat{a}(1,s(\fat{\mu}))  \end{array}$}
\dottedLine
\RightLabel{$\mathbf{IND}$}
\UnaryInfC{$\begin{array}{c}  \hat{a}(0,1) = \hat{a}(1,0) \vdash \\   \hat{a}(\fat{\alpha},1) = \hat{a}(1,\fat{\alpha})  \end{array}$}
\UnaryInfC{$\vdots$}
\RightLabel{$cut$}
\BinaryInfC{$\vdots$}
\end{prooftree}
\end{tiny}

\begin{tiny}
\begin{prooftree}
\AxiomC{$(\nu_2)$}
\AxiomC{$\begin{array}{c}\vdots\\  \hat{a}(\fat{\alpha}, s(\hat{a}(0,\fat{\gamma}))) = \hat{a}(s(\fat{\gamma}),\fat{\alpha}),\\ \fat{\alpha} = \fat{\alpha}, s(\fat{\gamma}) = s(\fat{\gamma}), \\ s(\hat{a}(0,\fat{\gamma})) = s(\fat{\gamma}) \vdash\\ \hat{a}(\fat{\alpha}, s(\fat{\gamma})) = \hat{a}(s(\fat{\gamma}),\fat{\alpha}) \end{array}$}
\RightLabel{$cut$}
\BinaryInfC{$\vdots$}
\UnaryInfC{$\begin{array}{c} \hat{a}(\fat{\alpha},\fat{\gamma}) =\hat{a}(\fat{\gamma},\fat{\alpha}) \vdash \\ \hat{a}(\fat{\alpha}, s(\fat{\gamma})) = \hat{a}(s(\fat{\gamma}),\fat{\alpha}) \end{array}$}
\RightLabel{$\mathbf{IND}$}
\UnaryInfC{$\begin{array}{c} \hat{a}(\fat{\alpha},0) =\hat{a}(0,\fat{\alpha}) \vdash\\ \hat{a}(\fat{\alpha}, \fat{\beta} ) = \hat{a}(\fat{\beta},\fat{\alpha}) \end{array}$}
\end{prooftree}
\end{tiny}
where $\nu_2$ is
\begin{tiny}
\begin{prooftree}
\AxiomC{$\begin{array}{c}(1)\\  \hat{a}(\fat{\alpha},\fat{\gamma}) =\hat{a}(\fat{\gamma},\fat{\alpha}) \vdash\\ \hat{a}(\fat{\alpha}, \hat{a}(1,\fat{\gamma})) = \hat{a}(s(\fat{\gamma}),\fat{\alpha}) \end{array}$}
\AxiomC{$\begin{array}{c}  \hat{a}(\fat{\alpha}, \hat{a}(1,\fat{\gamma})) = \hat{a}(s(\fat{\gamma}),\fat{\alpha}),\\ \fat{\alpha} = \fat{\alpha}, s(\fat{\gamma}) = s(\fat{\gamma}), \\ \hat{a}(1,\fat{\gamma}) = s(\hat{a}(0,\fat{\gamma})) \vdash\\ \hat{a}(\fat{\alpha}, s(\hat{a}(0,\fat{\gamma}))) = \hat{a}(s(\fat{\gamma}),\fat{\alpha}) \end{array}$}
\RightLabel{$cut$}
\BinaryInfC{$\vdots$}
\UnaryInfC{$\begin{array}{c} \hat{a}(\fat{\alpha},\fat{\gamma}) =\hat{a}(\fat{\gamma},\fat{\alpha}) \vdash\\ \hat{a}(\fat{\alpha}, s(\hat{a}(0,\fat{\gamma}))) = \hat{a}(s(\fat{\gamma}),\fat{\alpha}) \end{array}$}
\end{prooftree}
\end{tiny}
\end{example}

To get from {\PRA} to {\PA} we need to remove the requirement of only considering  strict \mvLKIE-proofs. In terms of $P$-schema, this would mean allowing computational sub $P$-schema. We can build a \mvLKIE-proof $\chi$ containing a sub-derivation $\psi$ which is a strict \mvLKIE-proofs by allowing strong quantification on the passive parameters of $\psi$ in $\chi$. The problem is that doing so can possibly destroy the translation of Section~\ref{LocalINC}. To show this is not possible we just have to consider translation of $\chi$ in parts, first we translate $\psi$ and then we translate $\chi$ without $\psi$, that is replacing $\psi$ with a theory axiom during translation. Once we finish the translation of both parts we glue them back together to get a translation of the original proof $\chi$. This argument can be formalized as mentioned in previous sections by performing a structural induction based on the above argument. This results in the  following theorems: 
\begin{theorem}
\label{lem:PASI}
There is a $\mathbf{P}$-schema $\Psi$  with end-sequent $\mathcal{S}$ iff there is a \mvLKIE-derivation of $\mathcal{S}$.
\end{theorem}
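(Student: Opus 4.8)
The plan is to prove Theorem~\ref{lem:PASI} as a biconditional by establishing each direction separately, building on the machinery already assembled for the strict case. For the direction from $\mathbf{P}$-schema to \mvLKIE-derivation, I would first handle the strict subcase, which is exactly Lemma~\ref{lem:SI}: a strict $\mathbf{P}$-schema with end-sequent $\mathcal{S}$ yields a strict \mvLKIE-derivation of $\mathcal{S}$. The general (non-strict) case then proceeds by well-founded structural induction on the $\mathbf{P}$-schema construction, using the $\prec^*$ ordering. The key idea, already foreshadowed in the remarks following Corollary~\ref{thm:one} and in the discussion preceding the theorem, is to treat each computational sub $\mathbf{P}$-schema as a self-contained unit: I would translate it to an \mvLKIE-derivation by the induction hypothesis, introduce strong quantifiers on its passive parameters (which are legitimately eigenvariables, per the quantification convention stated earlier), and splice the resulting quantified derivation into the translation of the ambient schema via a cut. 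The splicing works precisely because condition (2) of the sub $\mathbf{P}$-schema definition guarantees that the active and internal parameters of the instantiating term do not clash with the surrounding context.

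For the converse direction, from \mvLKIE-derivation to $\mathbf{P}$-schema, the strict base case is Lemma~\ref{lem:one} together with Corollary~\ref{thm:one}, which already supply the transformation $T$ replacing induction inferences by proof links and assembling components in linkability order. Again the general case is a structural induction: given a \mvLKIE-derivation containing sub-derivations that are themselves strict \mvLKIE-proofs with quantified passive parameters, I would peel off each such strict block, apply Corollary~\ref{thm:one} to obtain a strict sub $\mathbf{P}$-schema, and register it as a computational sub $\mathbf{P}$-schema of the whole. The decomposition-and-reassembly strategy is the one described in the paragraph before the theorem: translate $\psi$ and translate $\chi$ with $\psi$ replaced by a theory axiom, then glue. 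Soundness of the reassembled object as a genuine $\mathbf{P}$-schema follows from Theorem~\ref{thm:sound}, since the end-sequents of the computational sub-schemata are exactly the theory axioms $T$ used in that soundness statement.

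I expect the main obstacle to be the bookkeeping at the interface between a computational sub-schema and its context, specifically ensuring that quantifying the passive parameters of the inner block is sound and does not interfere with the parameter discipline of the outer translation. Concretely, when I strong-quantify the passive parameters of an inner strict block, I must verify that these parameters act as genuine eigenvariables in the combined derivation, i.e.\ that they satisfy the eigenvariable condition and do not occur in the context outside the block; condition (2) of the sub $\mathbf{P}$-schema definition is exactly what licenses this, but checking that it transfers correctly through the cut in both directions of the translation is the delicate point. A secondary subtlety is confluence and termination of the gluing: I must confirm that reassembling translated parts reproduces a well-founded $\prec^*$ ordering on components, which is guaranteed because the sub $\mathbf{P}$-schema relation respects the active-parameter disjointness of condition~(1).

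Since the detailed inductive argument is routine once the strict base cases (Lemmas~\ref{lem:SI}, \ref{lem:one} and Corollary~\ref{thm:one}) are in hand, the proof would largely consist of spelling out the structural induction and invoking Theorem~\ref{thm:sound} to certify that the back-and-forth translation preserves the end-sequent $\mathcal{S}$ in each direction.
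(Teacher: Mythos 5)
Your proposal follows essentially the same route as the paper: the strict case (Lemmas~\ref{lem:SI}, \ref{lem:one}, Corollary~\ref{thm:one}) serves as the base, and the general case is handled by structural induction over the computational sub $\mathbf{P}$-schemata, translating each inner strict block separately (replacing it by a theory axiom in the ambient translation, with strong quantification of its passive parameters) and gluing the pieces back together --- which is precisely the decomposition-and-reassembly argument the paper sketches before and inside its proof. Your version is in fact more explicit than the paper's about the eigenvariable bookkeeping and about treating the two directions of the biconditional separately, but it is the same argument.
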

\begin{proof}
We can convert $\Psi$ into an \mvLKIE-derivation by structural induction over the number of passive parameters not associated with computational sub $\mathbf{P}$-schema.  First we consider strict $\mathbf{P}$-schema (Theorem~\ref{thm:PRAEQ}). As the IH we assume that the theorem holds for the first $n$ computational sub $\mathbf{P}$-schema of $\Psi$, then we show it for $n+1$.
\end{proof}
Finally, we can extend the results of this section to \PA. 

\begin{theorem}
\label{thm:PAEQ} 
There is an  $\mathcal{E}$-free  \mvLKIE-derivation  of a sequent $S$ iff there is a \PA-calculus proof of $S$. 
\end{theorem}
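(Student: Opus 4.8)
The plan is to prove both directions by reducing to the strict case already settled in Theorem~\ref{thm:PRAEQ} and then bridging the gap between \PRA\ and \PA\ by a structural induction that introduces exactly one layer of strong quantification at a time. The guiding observation is that the only essential difference between the two calculi is quantification over inductive arguments, and that in the $\mathbf{P}$-schema world this is precisely the passage from strict to non-strict schemata, i.e.\ the appearance of computational sub $\mathbf{P}$-schemata. Accordingly I would take the induction measure to be the number of computational sub $\mathbf{P}$-schemata (equivalently, the nesting depth of strongly quantified passive parameters), with the strict case as the base. Throughout, $\mathcal{E}$-elimination is handled uniformly by Lemma~\ref{lem:Efree}, so I may assume all objects are already $\mathcal{E}$-free.

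For the forward direction ($\mathcal{E}$-free \mvLKIE-derivation $\Rightarrow$ \PA-calculus proof), I would first use Theorem~\ref{lem:PASI} to view the given derivation as a $\mathbf{P}$-schema $\Psi$ and locate an innermost computational sub $\mathbf{P}$-schema $\psi$. At the bottom of the recursion $\psi$ is strict, so by Theorem~\ref{thm:PRAEQ} together with $\PRA \subseteq \PA$ it translates to a \PA-calculus proof of its end-sequent; at higher levels this is the induction hypothesis. Since the passive parameters of $\psi$ are genuine eigenvariables, I can strongly quantify them to obtain a universally quantified lemma. I then translate the remainder of $\Psi$ — the derivation $\chi$ with $\psi$ replaced by the axiom recording that lemma — again by the induction hypothesis, and glue the two \PA-calculus proofs with a cut on the quantified lemma. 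The eigenvariable side condition of the strong quantifier inference is guaranteed precisely by condition (2) in the definition of sub $\mathbf{P}$-schema, which forbids the quantified passive parameters from leaking into $\Psi \setminus \psi$.

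For the backward direction (\PA-calculus proof $\Rightarrow$ $\mathcal{E}$-free \mvLKIE-derivation) I would reverse this construction. A \PA-calculus proof differs from a \PRA-calculus proof only through quantifier inferences and induction over quantified formulas; appealing to the fact that a single inductively introduced quantifier suffices for \PA, I would peel off a topmost block of strong quantifier inferences, recognize its eigenvariable as a passive parameter, and cut the proof at the corresponding lemma. The quantifier-free, induction-only residue translates to a strict \mvLKIE-derivation by the reverse direction of Theorem~\ref{thm:PRAEQ}, and the quantified lemma is reattached as a computational sub $\mathbf{P}$-schema, i.e.\ as a non-strict link. Iterating along the quantifier/induction nesting assembles the full $\mathcal{E}$-free \mvLKIE-derivation, and Theorem~\ref{lem:PASI} certifies that the result is legitimate.

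The delicate point — and the step I expect to consume most of the work — is the gluing: showing that splitting $\chi$ into the strict part $\psi$ and the residue ``$\chi$ with $\psi$ replaced by an axiom'', translating each separately, and recombining does not corrupt the translation of Section~\ref{LocalINC}. Concretely, I would have to maintain as an invariant that no passive parameter quantified in $\psi$ reappears in the residue and vice versa, so that each strong quantifier inference introduced on the \PA\ side is sound and each passive parameter introduced on the schema side is admissible. Once this invariant is carried through the induction, both directions close and the equivalence of $\mathcal{E}$-free \mvLKIE-derivations with \PA-calculus proofs follows.
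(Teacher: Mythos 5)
Your forward direction is essentially the paper's own argument: structural induction over the computational sub $\mathbf{P}$-schemata (via Theorem~\ref{lem:PASI}), with the strict case settled by Theorem~\ref{thm:PRAEQ} and Lemma~\ref{lem:Efree}, translating the strict part $\psi$ and the residue ``$\chi$ with $\psi$ replaced by a theory axiom'' separately and gluing with a cut on the quantified lemma. That is exactly the decomposition the paper sketches in the paragraph preceding the theorem, so this half is fine.

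The backward direction is where you depart from the paper, and your replacement argument has a genuine gap. The paper's entire proof of this direction is the observation that the \PA-calculus induction rule is a \emph{special case} of the $\mathit{mv}\mathbf{IND}$ rule (namely the case with no internal parameters, the eigenvariable playing the role of a passive parameter), and every other rule of the \PA-calculus is already a rule of \mvLKIE, since \LKE\ extends \LK\ including its quantifier rules. A \PA-calculus proof therefore \emph{is} an $\mathcal{E}$-free \mvLKIE-derivation, rule for rule; no decomposition is needed. Your proposal instead tries to peel off ``topmost blocks of strong quantifier inferences'' and translate the ``quantifier-free, induction-only residue'' by the reverse direction of Theorem~\ref{thm:PRAEQ}. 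This presupposes a block normal form that arbitrary \PA-calculus proofs do not have: weak quantifier inferences ($\forall{:}l$, $\exists{:}r$), cuts on quantified formulas, and induction inferences whose induction formula $F$ itself contains quantifiers can be interleaved arbitrarily with the strong quantifier inferences, so the residue after peeling need not be quantifier-free and Theorem~\ref{thm:PRAEQ} does not apply to it. The appeal to the fact that a single inductively introduced quantifier suffices for \PA\ does not repair this: that is a statement about axiomatizing \PA, not a normal-form theorem for \PA-calculus proofs, and invoking it would require a nontrivial global proof transformation you do not supply. The direct rule-by-rule embedding makes all of this machinery unnecessary.
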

\begin{proof}
Note that the \PA induction rule is a special case of the $\mvLKIE$ induction rule and thus making backwards translation possible
\end{proof}

\section{Herbrand's Theorem \& Systems: beyond $k$-induction}
\label{sec:krei}
In this section we introduce an extension of the version of Herbrand's theorem presented in~
\cite{CDunchev2014,ALeitsch2017}, the concept of {\em Herbrand systems}, and how to extend this 
concept to $P$-schema. We should note that Herbrand's theorem can only be extended to strict $P$-
schema, that is $P$-schema equivalent to primitive recursive arithmetic. The more general form of 
$P$-schema we introduced earlier allows for computational \textit{sub}$P$-schema   which  encode 
``infinite'' information, that is, in some sense introduce new theory. Constructing Herbrand 
systems for strict $P$-schema requires extending the method of~\cite{CDunchev2014,ALeitsch2017} to 
strict $P$-schema which is currently being investigated. However, if we assume that the  strict $P
$-schemata we are dealing with either only contain propositional cuts, that is quantifier free 
cuts, or are in Inessential \textit{cut} Normal Form (I\textit{cut}NF), a particular type of proof 
with quantifier free cuts, then it is easy to see how previous result can be extended to strict $P
$-schema. The I\textit{cut}NF (See Figure~\ref{normalform}) is based on the concept of {\em characteristic formula schema}~
\cite{ALeitsch2017} (The symbol F$_{i}$ in Figure~\ref{normalform})  which essentially represents the cut structure of a given proof schema. 
Introducing this concept for strict $P$-schema is beyond the scope of this work, and would 
essentially result in a formalism nearly identical to that introduced in~\cite{ALeitsch2017}. 
Therefore we refrain from discussing it here and refer the reader to that work. 

\begin{definition}[Multi-parameter Herbrand system (extension of def. in~\cite{ALeitsch2017})]
\label{HS}
Let $G = \exists x_1, \cdots \exists x_{\alpha} F(n_1,\cdots n_{\beta},$ $ x_1 ,\cdots , x_{\alpha} )$, s.t.
$F(n_1,\cdots n_{\beta}, x_1 ,\cdots , x_{\alpha} )$ quantifier-free and $n_{1},\cdots n_{\beta}: \omega$ are the only free variables of $G$. Then a Herbrand system for $G$ is a rewrite system $R$ (containing the list constructors and a function symbol $W$) such that for all $(\gamma_1,\cdots,\gamma_{\beta}) \in \mathbb{N}$, the normal form of $W(\gamma_1,\cdots,\gamma_{\beta})$ w.r.t. $R$ is a list containing $\epsilon$
lists of terms $t_{\mu,1} , \cdots , t_{\mu,\alpha}$ s.t. $F(\gamma_1,\cdots,\gamma_{\beta},$ $ t_{1,1} , \cdots , t_{1,\alpha}) \vee \cdots \vee  F(\gamma_1,\cdots,\gamma_{\beta},t_{\epsilon,1} , \cdots , t_{\epsilon,\alpha})$ is provable using the \LKE\ calculus.
\end{definition}
The formula $F$ which is being considered in Definition~\ref{HS} is the characteristic formula representing the cut structure of a given proof schema/strict $P$-schema. To actually extend Herbrand's theorem we need to assume the proof has the form mentioned earlier in this section as was done in~\cite{ALeitsch2017}. Transforming a strict $P$-schema ICutNF is left to future work, but it is enough to assume quantifier free cuts.
\begin{figure}
\begin{tiny}
\begin{prooftree}
	\AxiomC{$\phi_\alpha$}
	\UnaryInfC{$\Gamma \vdash \Delta, F_\alpha$}
	\AxiomC{$\phi_2$}
	\UnaryInfC{$\Gamma \vdash \Delta, F_2$}
	\AxiomC{$\phi_1$}
	\UnaryInfC{$\Gamma \vdash \Delta, F_1$}
	\AxiomC{$\Phi$}
	\UnaryInfC{$F_1 , \ldots , F_\alpha \vdash$}
	\RightLabel{$(w : l)$}
	\UnaryInfC{$F_1 , \ldots , F_\alpha, \Gamma \vdash \Delta$}
	\RightLabel{$(cut + c : * )$}
	\BinaryInfC{$\Gamma, F_2 , \ldots , F_\alpha \vdash \Delta$}
	\RightLabel{$(cut + c : * )$}
	\BinaryInfC{$\Gamma, F_3 , \ldots , F_\alpha \vdash \Delta$}
	\UnaryInfC{$\vdots$}
	\RightLabel{$(cut + c : * )$}
	\BinaryInfC{$\Gamma \vdash \Delta$}
\end{prooftree}
\end{tiny}
\caption{Illustration of an I\textit{cut}NF where $\phi_i$ is an instance of the projection schema for a substitution $\sigma_i$, $F_{i}$ is the result of applying $\sigma_i$ to the characteristic formula, and $\Phi$ is a proof of the Herbrand conjunction $F_1 , \ldots , F_\alpha \vdash$. }
\label{normalform}
\end{figure}

\begin{theorem}[Extension of Herbrand's theorem to strict $P$-schema]
Let $$ G = \exists x_1, \cdots \exists x_{\alpha} F(n_1,\cdots n_{\beta}, x_1 ,\cdots , x_{\alpha} ) $$ and assume that $\Psi$  is a strict $P$-schema, containing only quantifier-free cuts, with end-sequent
$\vdash G$. There exists a Herbrand system $R$ for $G$ such that $R$ is of linear size w.r.t. $\Psi$.
\end{theorem}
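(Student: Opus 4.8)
The plan is to mirror the single-parameter construction of~\cite{ALeitsch2017}, treating the strict $P$-schema $\Psi$ componentwise and lifting each ingredient to the multi-parameter setting. First I would use the hypothesis that $\Psi$ contains only quantifier-free cuts (equivalently, that it is in I\textit{cut}NF) so that the cut structure is captured by a characteristic formula schema. Reading off Figure~\ref{normalform}, the proof factors into the projection schemata $\phi_i$, each proving $\Gamma \vdash \Delta, F_i$ for the appropriately substituted characteristic formula $F_i = \sigma_i(F)$, together with the propositional proof $\Phi$ of the Herbrand conjunction $F_1, \ldots, F_\alpha \vdash$. Since the cuts are quantifier-free, $\Phi$ is an \LKE-proof of a quantifier-free sequent and contributes no new witnessing terms; the existential witnesses for $G$ come entirely from the substitutions $\sigma_i$ read off the projections.

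The core of the argument is the construction of the rewrite system $R$ with its distinguished symbol $W$. I would define $W$ by primitive recursion following the recursive structure of $\Psi$: each component $\mathbf{C}_i = (\psi_i, \pi, \nu)$ gives rise to a base-case rule $\hat{\psi_i}(0, \ldots) \to \cdots$ and a step-case rule $\hat{\psi_i}(s(n), \ldots) \to \cdots$, precisely the link rewrite rules of the evaluation of Section~\ref{sec:evalInter}, but now returning lists of $\alpha$-tuples of witness terms rather than proofs. The list constructors accumulate, across the unrolling, the tuples $(t_{\mu,1}, \ldots, t_{\mu,\alpha})$ determined by the projection substitutions. Because each component carries at most one active parameter and the component ordering $\prec^*$ is well founded, this recursion terminates and is confluent by the same reasoning as Lemma~\ref{lem:unrolling}, so that for any ground instantiation $(\gamma_1, \ldots, \gamma_\beta)$ the normal form of $W(\gamma_1, \ldots, \gamma_\beta)$ is a finite list of witness tuples.

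To establish correctness I would argue that, for each $(\gamma_1, \ldots, \gamma_\beta) \in \mathbb{N}$, instantiating $\Psi$ via the evaluation map yields an \LK-proof of $\vdash G$ at that instance whose quantifier inferences are exactly those introduced from the projection witnesses collected by $W$; hence the disjunction $\bigvee_{\mu=1}^{\epsilon} F(\gamma_1, \ldots, \gamma_\beta, t_{\mu,1}, \ldots, t_{\mu,\alpha})$ is \LKE-provable, which is the defining property of a Herbrand system in Definition~\ref{HS}. The linear size bound follows because each of the $|\Psi|$ components contributes only the constant number of rewrite rules above, so $|R| = O(|\Psi|)$; the multiple parameters do not multiply the rule count, since every component still carries a single active parameter.

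The hardest part will be the multi-parameter bookkeeping in the recursion. In~\cite{ALeitsch2017} the system $W$ recurses on one numeric argument, whereas here $W$ must manage $\beta$ passive parameters together with the well-founded linkability ordering among components. The obstacle is to show that the characteristic formula schema and projection schema constructions generalize so that the substitutions $\sigma_\mu$ depend correctly on all of $\gamma_1, \ldots, \gamma_\beta$, and that interleaving the per-parameter primitive recursions along $\prec^*$ still yields a terminating, confluent system producing exactly the required Herbrand disjunction. I expect this to reduce, by the strictness hypothesis and the resulting absence of computational sub $P$-schema, to the single-parameter case of~\cite{ALeitsch2017} applied componentwise, with the component ordering supplying the outer well-founded induction.
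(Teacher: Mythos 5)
Your proposal is correct and follows essentially the same route as the paper: the paper's proof likewise consists of replaying the single-parameter construction of~\cite{ALeitsch2017} (characteristic formula, projections, and the rewrite system for the witness terms) and lifting it to a vector of parameters. The only difference is organizational---the paper phrases the lifting as one extra induction over the number of parameters $\beta$, whereas you run the outer well-founded induction along the component ordering $\prec^*$---and your writeup in fact supplies considerably more detail (the explicit rules for $W$, termination and confluence via Lemma~\ref{lem:unrolling}, and the per-component count yielding linearity) than the paper's own two-sentence proof.
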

\begin{proof}
The proof essentially follows the same steps as the proof in~\cite{ALeitsch2017} except we must consider a vector of parameters instead of a single parameter. This will introduce one extra induction over the number of parameters.
\end{proof}

\section{Conclusion}
\label{CON}
In this paper we generalized the proof schemata formalism of~\cite{ALeitsch2017} to a much larger fragment of arithmetic. Our extension does not effect the applicability of the proof analysis tools outlined in~\cite{CDunchev2014}. Also, we show that our new formalism is at least as expressive as Peano arithmetic. 
We conjecture that it is a conservative extension of arithmetic and plan to address this question in future work. 

Furthermore, we plan to investigate extensions of the method of~\cite{ALeitsch2017} to this more general formalism being that it provides a cut-elimination complete method for proof schemata. Also, as was addressed in~\cite{DCerna2017a}, we would like to develop a calculus for construction of $\mathbf{P}$-schema directly and in the process develop compression techniques for components used in various locations in the same $\mathbf{P}$-schema. One topic concerning proof schemata which has not been investigated is using inductive definitions other than the natural numbers to index the proof. We plan to investigate generalizations of the indexing sort in future work.  

Also, we provide a generalization of Herbrand's theorem to strict $P$-schema based on the relate work done in~\cite{ALeitsch2017}. While this extension does not extend to {\PA} it does extend to \PRA. Concerning conservative reflection principles for theories equivalent to {\PA} \cite{RParikh1973}, this work provides an interesting and non-trivial example of such principles  providing an alternative and advantageous perspective of the theory of \PA. So far most research into schematic formalisms has been focused on proof transformation, the area which gave birth the concept. By providing an equivalence result with a strong arithmetic theory, we hope others will find interest in this formalism. 

{\textbf{Acknowledgements:} We would like to thank  Michael Lettmann, Alexander Leitsch and Matthias Baaz for contributing to the polishing of this work.}
\bibliographystyle{plain}
\bibliography{References}

\end{document}